\theoremstyle{plain}
\newtheorem{prop}{Proposition}[section]
\newtheorem{thm}[prop]{Theorem}
\newtheorem{cor}[prop]{Corollary}
\newtheorem{lem}[prop]{Lemma}
\theoremstyle{definition}
\newtheorem{dfn}[prop]{Definition}
\newtheorem{rem}[prop]{Remark}
\newtheorem{lab}[prop]{}
\newcommand{\C}{{\mathbb{C}}}
\renewcommand{\P}{{\mathbb{P}}}
\newcommand{\R}{{\mathbb{R}}}
\newcommand{\scrF}{{\mathscr{F}}}
\newcommand{\scrU}{{\mathscr{U}}}
\newcommand{\sfS}{\mathsf{S}}
\DeclareMathOperator{\im}{im}
\DeclareMathOperator{\interior}{int}
\DeclareMathOperator{\relint}{relint}
\DeclareMathOperator{\rk}{rk}
\DeclareMathOperator{\spn}{span}
\newcommand{\aff}{\mathrm{aff}}
\newcommand{\Ex}{\mathrm{Ex}}
\newcommand{\Gram}{\mathrm{Gram}}
\newcommand{\To}{\Rightarrow}
\renewcommand{\subset}{\subseteq}
\renewcommand{\supset}{\supseteq}
\newcommand{\idl}[1]{\langle #1\rangle}
\newcommand{\du}{{\scriptscriptstyle\vee}}
\newcommand{\ol}[1]{\overline{#1}}
\newcommand{\plus}{{\scriptscriptstyle+}}
\renewcommand{\emptyset}{\varnothing}
\renewcommand{\setminus}{\smallsetminus}
\renewcommand{\epsilon}{\varepsilon}
\renewcommand{\theta}{\vartheta}
\newcommand{\bil}[2]{\langle{#1},{#2}\rangle}
\renewcommand{\choose}[2]{\genfrac(){0pt}{}{#1}{#2}}
\begin{document}

\title
[Extreme points of Gram spectrahedra of binary forms]
{Extreme points of\\[3pt]Gram spectrahedra of binary forms}

\author
 {Claus Scheiderer}
\address
 {Fachbereich Mathematik und Statistik, Universit\"at Konstanz,
 78457 Konstanz, Germany}
\email
 {claus.scheiderer@uni-konstanz.de}

\begin{abstract}
The Gram spectrahedron $\Gram(f)$ of a form $f$ with real
coefficients parametrizes the sum of squares decompositions of $f$,
modulo orthogonal equivalence. For $f$ a sufficiently general
positive binary form of arbitrary degree, we show that $\Gram(f)$ has
extreme points of all ranks in the Pataki range. This is the first
example of a family of spectrahedra of arbitrarily large dimensions
with this property. We also calculate the dimension of the set of
rank $r$ extreme points, for any~$r$. Moreover, we determine the
pairs of rank two extreme points for which the connecting line
segment is an edge of $\Gram(f)$.
\end{abstract}

\thanks
{Supported by Deutsche Forschungsgemeinschaft (DFG) under grant
\texttt{SCHE281/10}. Part of this work was done in Fall 2017 while
the author enjoyed the hospitality of MSRI Berkeley. Both are
gratefully acknowledged. I am much indebted to Thorsten Mayer for his
careful reading and for finding an error in a previous version.}
\maketitle


\section{Introduction}

Given a form $f$ that is a sum of squares of forms, there are usually
many inequivalent ways of writing $f$ as a sum of squares. The set
$\Gram(f)$ of all sum of squares (\emph{sos}) representations of
$f$, modulo orthogonal equivalence, has a natural structure of a
spectrahedron, so it is an object of geometric nature. Studying the
convex-geometric properties of $\Gram(f)$, and in particular its
extreme points, is relevant for the problem of optimizing linear
functions over all sum of squares representations of~$f$.
With probability one, the optimizer for a random such problem will be
a unique extreme point of $\Gram(f)$.
From an algebraic perspective, studying the extreme points of the
Gram spectrahedron is natural since every sos representation of $f$
arises as a convex combination of representations that correspond to
extreme points of $\Gram(f)$.

Although the basic idea goes back to Choi, Lam and Reznick \cite{clr}
in 1995, a systematic study of Gram spectrahedra was taken up only
recently. Gram spectrahedra of ternary quartics were considered
by Plaumann, Sturmfels and Vinzant in \cite{psv}.
The paper \cite{cpsv} by Chua, Plaumann, Sinn and Vinzant is a survey
of results and open questions on Gram spectrahedra.
Among others, the authors discuss Gram spectrahedra of binary forms,
and for sextic binary forms they relate the Gram spectrahedra to
Kummer surfaces in $\P^3$, see also \cite{orsv}.

Any point of a spectrahedron has a rank. The Pataki interval
describes the range of values that the rank of an extreme point of a
general spectrahedron may have. For points of Gram spectrahedra, the
rank is identified with the length of the corresponding sum of
squares decomposition. In particular, the sum of squares length
of~$f$, or the collection of different sum of squares representations
of a given length, are naturally encoded in $\Gram(f)$. These are
invariants that have received a lot of attention in particular cases,
starting with Hilbert \cite{hi}, and more recently \cite{prss}, for
ternary quartics. Lately, results of a similar spirit were obtained
for varieties of minimal or almost minimal degree, see
\cite{bsv, bpsv, sch:pyth, cpsv}.

In this paper we focus on Gram spectrahedra in the most basic case
possible, namely binary forms.
For $f$ a sufficiently general positive binary form of arbitrary
degree, we show that $\Gram(f)$ has extreme points of all ranks in
the Pataki range (Theorem \ref{main}). This gives a positive answer
to Question~4.2 from \cite{cpsv}. It also establishes the first known
instance of a family of spectrahedra of arbitrary dimensions with
this property.
In fact we calculate the dimension of the set of extreme points of
any given rank $r$, for $f$ sufficiently general (Corollary
\ref{dimexr}).

The proofs for these facts rely on a purely algebraic result of
independent interest (Theorem \ref{quadindepallg}): For any integers
$d\ge0$ and $r\ge1$ with $\choose{r+1}2\le2d+1$, there exists a
sequence $(p_1,\dots,p_r)$ of $r$ binary forms of degree~$d$ for
which the $\choose{r+1}2$ products $p_ip_j$ ($1\le i\le j\le r$) are
linearly independent. Any sequence with this property will be called
\emph{quadratically independent}.

When $f$ is a general positive binary form of degree $2d$,
$\Gram(f)$ has precisely $2^{d-1}$ extreme points of rank~two. Given
two of these points, the line segment connecting them may or may not
be a face (edge) of $\Gram(f)$. For sextic forms we show that it is
never an edge, while for $2d\ge10$ it always is an edge. Most
interesting is the case $\deg(f)=8$, where the edges between the
eight rank two extreme points form a complete bipartite graph
$K_{4,4}$ (Theorem \ref{edges}).

We briefly comment on our methods. Throughout we pursue a
coordinate-free approach to Gram spectrahedra. Let $\theta\in
\Gram(f)$, and let $F$ be the face of $\Gram(f)$ that has $\theta$ in
its relative interior. We constantly use the following
characterization of $\dim(F)$:
If $f=p_1^2+\cdots+p_r^2$ is the sos representation that corresponds
to $\theta$ (with $p_1,\dots,p_r$ linearly independent forms),
$\dim(F)$ is the number of quadratic relations between
$p_1,\dots,p_r$. In particular, $\theta$ is an extreme point of
$\Gram(f)$ if and only if
$p_1,\dots,p_r$ are quadratically independent.

The paper is organized as follows. In Section~2 we review the
well-known results by Ramana and Goldman on the facial structure of
spectrahedra, together with the Pataki range for the rank. We then
specialize to Gram spectrahedra and formulate the dimension formula
for faces in terms of quadratic relations. Section~4 contains the
proof for the existence of long quadratically independent sequences
of binary forms. In Sections~5 and~6 we present our analysis of the
ranks of extreme points and of the edges between rank two extreme
points.

We use standard terminology from convex geometry.
For $K\subset\R^n$ a closed convex set, $\aff(K)$ denotes the
affine-linear hull of $K$ and $\relint(K)$ is the relative interior
of $K$, i.e.\ the interior of $K$ relative to $\aff(K)$. A convex
subset $F\subset K$ is a face of $K$ if $x,\,y\in K$, $0<t<1$ and
$(1-t)x+ty\in F$ imply $x,\,y\in F$. For every $x\in K$ there is a
unique face $F$ of $K$ with $x\in\relint(F)$, called the supporting
face of~$x$.


\section{Review of facial structure of spectrahedra}\label{sectspec}%

All results in this section are known. They are due to Ramana and
Goldman \cite{rg} for the first part, and to Pataki \cite{pa} for the
Pataki range. We nevertheless give them
a coordinate-free review here, i.e.\ without making reference to a
particular basis of the underlying vector space.

\begin{lab}
Let $V$ be a vector space over $\R$ with $\dim(V)<\infty$. Let
$V^\du$ be the dual space of $V$, and let $\sfS_2V\subset V\otimes V$
denote the space of symmetric tensors, i.e.\ tensors that are
invariant under the involution $v\otimes w\mapsto w\otimes v$. Of
course, $\sfS_2V$ is canonically identified with $\sfS^2V$, the
second symmetric power of~$V$, but it seems preferable in our context
to work with $\sfS_2$, rather than with $\sfS^2$.
The natural pairing between $v\in V$ and $\lambda\in V^\du$ is
denoted $\bil v\lambda=\bil\lambda v$. Elements of $\sfS_2V$ can be
identified either with symmetric bilinear forms $V^\du\times V^\du
\to\R$, or with self-adjoint linear maps $\varphi\colon V^\du\to V$,
where the adjoint refers to the natural pairing between $V$ and
$V^\du$.
We shall adapt the second point of view. Let $\varphi_\theta\colon
V^\du\to V$ denote the
linear map that corresponds to a
symmetric tensor $\theta=\sum_{i=1}^rv_i\otimes w_i\in\sfS_2V$. So
$\varphi_\theta(\lambda)=\sum_{i=1}^r\lambda(v_i)w_i=\sum_{i=1}^r
\lambda(w_i)v_i$ for $\lambda\in V^\du$.
The \emph{range} of $\theta\in\sfS_2V$, written $\im(\theta)$, is the
range (image) of the linear map $\varphi_\theta$. Thus, if
$v_1,\dots,v_r$ and $w_1,\dots,w_r$ are linearly independent,
$\im(\theta)=\spn(v_1,\dots,v_r)=\spn(w_1,\dots,w_r)$. The
\emph{rank} of $\theta$ is $\rk(\theta)=\dim\im(\theta)$.
\end{lab}

\begin{lab}
$\theta\in\sfS_2V$ is \emph{positive semidefinite} (\emph{psd}),
written $\theta\succeq0$, if $\bil{\varphi_\theta(\lambda)}\lambda
\ge0$ for every $\lambda\in V^\du$. If $\theta=\sum_iv_i\otimes
w_i$, this says $\sum_i\lambda(v_i)\lambda(w_i)\ge0$ for every
$\lambda\in V^\du$. The set $\sfS_2^\plus V=\{\theta\in\sfS_2V\colon
\theta\succeq0\}$ is a closed convex cone in $\sfS_2V$. If
$v_1,\dots,v_n\in V$ are linearly independent and $\theta=
\sum_{i=1}^na_{ij}v_i\otimes v_j$, where
$a_{ij}=a_{ji}\in\R$, then $\theta\succeq0$ if and only if the real
symmetric matrix $(a_{ij})$ is psd, i.e.\ has nonnegative
eigenvalues.
So $\sfS_2^\plus V$ gets identified with the cone of real symmetric
psd $n\times n$-matrices ($n=\dim(V)$), after fixing a linear basis
of $V$. We say that $\theta\in\sfS_2V$ is \emph{positive definite},
written $\theta\succ0$, if $\bil{\varphi_\theta(\lambda)}\lambda>0$
for every $0\ne\lambda\in V^\du$.

The fact that every real symmetric matrix can be diagonalized implies
that every $\theta\in\sfS_2V$ can be written $\theta=\sum_{i=1}^r
\epsilon_iv_i\otimes v_i$, with $r\ge0$, $\epsilon_i=\pm1$ and with
$v_1,\dots,v_r\in V$ linearly independent. Of course,
$\theta\succeq0$ is equivalent to $\epsilon_1=\cdots=\epsilon_r=1$.
\end{lab}

\begin{lem}\label{verwende}%
Given $\theta\in\sfS_2V$ and a linear subspace $U\subset V$, we have
$\im(\theta)\subset U$ if and only if $\theta\in\sfS_2U$.
\end{lem}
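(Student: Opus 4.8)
The plan is to reduce the statement to the diagonalization of symmetric tensors recorded just above, namely that every $\theta\in\sfS_2V$ can be written $\theta=\sum_{i=1}^r\epsilon_iv_i\otimes v_i$ with $\epsilon_i=\pm1$ and $v_1,\dots,v_r\in V$ linearly independent, together with the observation (also made above) that in such a representation one has $\im(\theta)=\spn(v_1,\dots,v_r)$.

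First I would settle the easy implication, namely that $\theta\in\sfS_2U$ forces $\im(\theta)\subset U$. If $\theta\in\sfS_2U\subset U\otimes U$, write $\theta=\sum_ja_j\otimes b_j$ with all $a_j,b_j\in U$; then $\varphi_\theta(\lambda)=\sum_j\lambda(a_j)b_j\in U$ for every $\lambda\in V^\du$, so $\im(\theta)\subset U$.

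For the converse, assume $\im(\theta)\subset U$ and apply the diagonalization to obtain $\theta=\sum_{i=1}^r\epsilon_iv_i\otimes v_i$ with $v_1,\dots,v_r$ linearly independent. Rewriting $\theta=\sum_iv_i\otimes(\epsilon_iv_i)$ and noting that $\epsilon_1v_1,\dots,\epsilon_rv_r$ are again linearly independent (each $\epsilon_i=\pm1\ne0$), the formula for the range in a linearly independent representation gives $\im(\theta)=\spn(v_1,\dots,v_r)$. Hence $\spn(v_1,\dots,v_r)=\im(\theta)\subset U$, so each $v_i$ lies in $U$, and therefore $\theta=\sum_i\epsilon_iv_i\otimes v_i\in\sfS_2U$.

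I do not expect a genuine obstacle here; the one point deserving care is that the diagonalization must be invoked for an \emph{arbitrary} symmetric tensor (with signs $\epsilon_i$), not merely a psd one, which is precisely the form in which it was stated in the text, and that $\im(\theta)$ may legitimately be read off from any linearly independent representation of $\theta$. A coordinate-free alternative would fix a direct sum decomposition $V=U\oplus W$, split $\sfS_2V$ into the three corresponding summands, and evaluate $\varphi_\theta$ on functionals vanishing on $U$ to kill the mixed and the $W\otimes W$ parts; this works as well but is longer, so I would take the diagonalization route.
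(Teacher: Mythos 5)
Your proof is correct and follows essentially the same route as the paper: diagonalize $\theta=\sum_i\epsilon_iv_i\otimes v_i$ with the $v_i$ linearly independent, deduce that each $v_i$ lies in $U$, and conclude $\theta\in\sfS_2U$. The only cosmetic difference is that you cite the previously stated fact $\im(\theta)=\spn(v_1,\dots,v_r)$ for a linearly independent representation, whereas the paper re-derives it on the spot by evaluating $\varphi_\theta$ on a dual system $\lambda_j$ with $\bil{v_i}{\lambda_j}=\delta_{ij}$.
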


\begin{proof}
The ``if'' direction is clear. Conversely assume $\im(\theta)\subset
U$, and write $\theta=\sum_{i=1}^rc_iv_i\otimes v_i$ with $0\ne
c_i\in\R$ and $v_1,\dots,v_r\in V$ linearly independent. If
$\lambda_1,\dots,\lambda_r\in V^\du$ are chosen with
$\bil{v_i}{\lambda_j}=\delta_{ij}$ for all $i,j$, we have
$\varphi_\theta(\lambda_j)=\sum_ic_i\lambda_j(v_i)v_i=c_jv_j$, and by
assumption this element lies in $U$ for every $j$.
Therefore $\theta\in\sfS_2U$.
\end{proof}

\begin{lem}\label{rangesum}%
If $\theta,\,\theta'\in\sfS_2V$ are psd, then $\im(\theta+\theta')=
\im(\theta)+\im(\theta')$.
\end{lem}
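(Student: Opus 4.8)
The plan is to prove the two inclusions separately; positive semidefiniteness is needed only for one of them. The inclusion $\im(\theta+\theta')\subset\im(\theta)+\im(\theta')$ holds with no hypothesis at all: since $\theta\mapsto\varphi_\theta$ is linear, $\varphi_{\theta+\theta'}(\lambda)=\varphi_\theta(\lambda)+\varphi_{\theta'}(\lambda)\in\im(\theta)+\im(\theta')$ for every $\lambda\in V^\du$.

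For the reverse inclusion the crucial elementary fact is: if $\psi\in\sfS_2V$ is psd and $\lambda\in V^\du$ satisfies $\bil{\varphi_\psi(\lambda)}\lambda=0$, then $\varphi_\psi(\lambda)=0$. To see this, diagonalize $\psi=\sum_{i=1}^s v_i\otimes v_i$ with $v_1,\dots,v_s$ linearly independent (possible since $\psi\succeq0$); then $\bil{\varphi_\psi(\lambda)}\lambda=\sum_i\bil{v_i}{\lambda}^2$ vanishes iff $\bil{v_i}\lambda=0$ for all $i$, i.e.\ iff $\varphi_\psi(\lambda)=\sum_i\bil{v_i}\lambda\,v_i=0$. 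From this I deduce $\ker(\varphi_{\theta+\theta'})=\ker(\varphi_\theta)\cap\ker(\varphi_{\theta'})$: the inclusion $\supset$ is trivial, and if $\varphi_{\theta+\theta'}(\lambda)=0$ then pairing with $\lambda$ gives $0=\bil{\varphi_\theta(\lambda)}\lambda+\bil{\varphi_{\theta'}(\lambda)}\lambda$, a sum of two nonnegative numbers (as $\theta,\theta'\succeq0$), so both vanish, whence $\varphi_\theta(\lambda)=0=\varphi_{\theta'}(\lambda)$ by the previous step.

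It remains to translate this equality of kernels (subspaces of $V^\du$) into the desired equality of ranges (subspaces of $V$) by passing to annihilators. For any $\psi\in\sfS_2V$ the map $\varphi_\psi$ is self-adjoint for the pairing between $V$ and $V^\du$, so for $v=\varphi_\psi(\mu)$ and $\lambda\in\ker(\varphi_\psi)$ we get $\bil v\lambda=\bil{\varphi_\psi(\mu)}\lambda=\bil{\varphi_\psi(\lambda)}\mu=0$; comparing dimensions ($\dim\im(\psi)=\dim V-\dim\ker(\varphi_\psi)$) then shows that $\im(\psi)$ is exactly the annihilator $\{v\in V:\bil v\lambda=0\ \forall\,\lambda\in\ker(\varphi_\psi)\}$. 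Applying this to $\theta$, $\theta'$ and $\theta+\theta'$, and using the standard identity that the annihilator of an intersection of two subspaces of $V^\du$ equals the sum of their annihilators, the kernel identity of the previous paragraph yields $\im(\theta+\theta')=\im(\theta)+\im(\theta')$.

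There is no serious obstacle in this argument. The one point that requires a little care in the coordinate-free setting is the identification of $\im(\psi)$ with the annihilator of $\ker(\varphi_\psi)$ and the bookkeeping of which space the various annihilators live in; the hypothesis $\theta,\theta'\succeq0$ is used exactly once, in the implication $\bil{\varphi_\psi(\lambda)}\lambda=0\To\varphi_\psi(\lambda)=0$. (Alternatively one could observe $\theta+\theta'\succeq\theta\succeq0$ and prove, by the same kernel computation, that $A\succeq B\succeq0$ forces $\ker(\varphi_A)\subset\ker(\varphi_B)$, hence $\im(\varphi_B)\subset\im(\varphi_A)$; this gives both $\im(\theta)$ and $\im(\theta')$ inside $\im(\theta+\theta')$.)
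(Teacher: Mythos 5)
Your argument is correct, and it is genuinely more self-contained than what the paper does: the paper's proof of this lemma is a one-line reduction to the ``well-known fact'' that $\im(A+B)=\im(A)+\im(B)$ for symmetric psd matrices $A,B$, with no proof of that fact given. You instead prove the statement from scratch, and you do it in the coordinate-free language the paper insists on: the easy inclusion $\im(\theta+\theta')\subset\im(\theta)+\im(\theta')$ by linearity of $\theta\mapsto\varphi_\theta$, then the kernel identity $\ker(\varphi_{\theta+\theta'})=\ker(\varphi_\theta)\cap\ker(\varphi_{\theta'})$ via the observation that $\bil{\varphi_\psi(\lambda)}\lambda=0$ forces $\varphi_\psi(\lambda)=0$ for psd $\psi$ (using the diagonalization $\psi=\sum_i v_i\otimes v_i$ from the paper's \S2), and finally the passage from kernels in $V^\du$ to ranges in $V$ by identifying $\im(\psi)$ with the annihilator of $\ker(\varphi_\psi)$ through self-adjointness and a dimension count, together with the standard identity that the annihilator of an intersection is the sum of the annihilators. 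Each step checks out, and the psd hypothesis enters exactly where you say it does. What your route buys is a complete proof consistent with the paper's coordinate-free setup; what the paper's route buys is brevity, since it treats the matrix statement (equivalently, $\ker(A+B)=\ker A\cap\ker B$ for psd $A,B$, which is the same computation you perform) as standard. Your closing alternative, deducing $\im(\theta),\im(\theta')\subset\im(\theta+\theta')$ from $\theta+\theta'\succeq\theta\succeq0$ by the same kernel comparison, is also fine and is arguably the quickest way to phrase the nontrivial inclusion.
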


\begin{proof}
This translates into the well-known fact that, for any two symmetric
psd matrices $A,\,B$, one has $\im(A+B)=\im(A)+\im(B)$.
\end{proof}

\begin{lem}\label{epsabzin}%
Let $\theta,\,\gamma\in\sfS_2V$ with $\theta\succeq0$ and
$\im(\gamma)\subset\im(\theta)$. Then there is a real number
$\epsilon>0$ with $\theta-\epsilon\gamma\succeq0$.
\end{lem}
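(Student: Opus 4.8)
The plan is to restrict the whole problem to the subspace $U:=\im(\theta)$, where $\theta$ becomes positive definite, and then conclude by a continuity (openness) argument. By Lemma \ref{verwende}, the hypothesis $\im(\theta)\subset U$ gives $\theta\in\sfS_2U$, and the hypothesis $\im(\gamma)\subset\im(\theta)=U$ gives $\gamma\in\sfS_2U$; so both tensors live in $\sfS_2U$. Moreover, for any $\eta\in\sfS_2U$ the quantity $\bil{\varphi_\eta(\lambda)}\lambda$ depends only on the restriction $\lambda|_U$: writing $\eta=\sum_iu_i\otimes u_i'$ with $u_i,u_i'\in U$, one has $\bil{\varphi_\eta(\lambda)}\lambda=\sum_i\lambda(u_i)\lambda(u_i')$, and every element of $U^\du$ is such a restriction. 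Hence $\eta\succeq0$ in $\sfS_2V$ if and only if $\eta\succeq0$ in $\sfS_2U$, and it suffices to produce $\epsilon>0$ with $\theta-\epsilon\gamma\succeq0$ in $\sfS_2U$.

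Next I check that $\theta$ is positive \emph{definite} as an element of $\sfS_2U$. Using the diagonalization recalled above, applied to the space $U$, write $\theta=\sum_{i=1}^r v_i\otimes v_i$ with $v_1,\dots,v_r\in U$ linearly independent (all signs $+1$, since $\theta\succeq0$). Here $r=\rk(\theta)=\dim\im(\theta)=\dim U$, and $\im(\theta)=\spn(v_1,\dots,v_r)$, so $v_1,\dots,v_r$ is in fact a basis of $U$, and in this basis the Gram matrix of $\theta$ is the identity $I_r$. Thus $\theta\succ0$ in $\sfS_2U$.

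Finally, let $B$ be the real symmetric Gram matrix of $\gamma$ in the same basis. The Gram matrix of $\theta-\epsilon\gamma$ is then $I_r-\epsilon B$, which is positive definite — in particular psd — as soon as $\epsilon>0$ is small enough that every eigenvalue of $\epsilon B$ lies in $(-1,1)$; equivalently, the cone of positive definite matrices is open and contains $I_r$, so it contains $I_r-\epsilon B$ for all sufficiently small $\epsilon>0$. For any such $\epsilon$ we get $\theta-\epsilon\gamma\succeq0$ in $\sfS_2U$, and therefore $\theta-\epsilon\gamma\succeq0$ in $\sfS_2V$, as desired. There is no serious obstacle in this argument; the only point requiring a moment's care is the equivalence noted in the first paragraph between positive semidefiniteness in $\sfS_2U$ and in $\sfS_2V$, which is precisely what licenses the passage to the smaller space.
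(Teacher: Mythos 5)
Your proof is correct. Note, though, that the paper itself does not argue at all: its proof of this lemma consists of translating the claim into coordinates and citing as "well-known" the matrix fact that $\im(B)\subset\im(A)$ and $A\succeq0$ imply $A-\epsilon B\succeq0$ for small $\epsilon>0$. What you have written is essentially the standard proof of that well-known fact, carried out in the paper's coordinate-free framework: you use Lemma \ref{verwende} to place both $\theta$ and $\gamma$ in $\sfS_2U$ with $U=\im(\theta)$, you correctly observe that positive semidefiniteness of an element of $\sfS_2U$ can be tested equally well against $U^\du$ or $V^\du$ (since every functional on $U$ extends to $V$ and the pairing only sees $\lambda|_U$), you note that $\rk(\theta)=\dim U$ forces $\theta\succ0$ on $U$ (Gram matrix $I_r$ in a diagonalizing basis), and you finish by openness of the positive definite cone. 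So the route is not conceptually different from what the paper intends, but it is more self-contained: the paper buys brevity by deferring to linear algebra folklore, while your version verifies that folklore and makes visible exactly where the hypothesis $\im(\gamma)\subset\im(\theta)$ is used (to get $\gamma$ into $\sfS_2U$, where $\theta$ dominates). No gaps.
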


\begin{proof}
This translates into the following well-known fact about real
symmetric matrices: If $A,\,B$ are such matrices with $\im(B)\subset
\im(A)$, and if $A\succeq0$, there is $\epsilon>0$ with $A-\epsilon B
\succeq0$.
\end{proof}

\begin{lab}\label{facespaces}%
For the following we fix an affine-linear subspace $L\subset\sfS_2V$
together with the corresponding spectrahedron $S=L\cap\sfS_2^\plus
V$. Results \ref{faceslem}--\ref{dimfacaff} below are all due to
Ramana-Goldman \cite{rg}.
For any subset $T\subset S$ we consider the linear subspace
$$\scrU(T)\>:=\>\sum_{\theta\in T}\im(\theta)$$
of $V$. For any linear subspace $U\subset V$, the set
$$\scrF(U)\>:=\>\{\theta\in S\colon\im(\theta)\subset U\}\>=\>
L\cap\sfS_2^\plus U$$
(Lemma \ref{verwende}) is a face of $S$ by Lemma \ref{rangesum}.
\end{lab}

\begin{lem}\label{faceslem}%
For any face $F\ne\emptyset$ of $S$ there is a linear subspace
$U\subset V$ with $F=\scrF(U)$. In fact we may take $U=\scrU(F)$.
\end{lem}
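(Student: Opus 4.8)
\noindent\emph{Proof proposal.}
The plan is to check the two inclusions $F\subset\scrF(U)$ and $\scrF(U)\subset F$ for the subspace $U:=\scrU(F)$. The first is immediate from the definitions: every $\theta\in F$ has $\im(\theta)\subset\sum_{\eta\in F}\im(\eta)=U$, so $\theta\in S$ with $\im(\theta)\subset U$, i.e.\ $\theta\in\scrF(U)$.

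For the reverse inclusion the key step is to exhibit a single element $\theta_0\in F$ with $\im(\theta_0)=U$ (a point of maximal range in $F$). Since $\dim(V)<\infty$, the subspace $U=\sum_{\eta\in F}\im(\eta)$ is already the sum of finitely many $\im(\eta_1),\dots,\im(\eta_m)$ with $\eta_i\in F$. I would then set $\theta_0:=\frac1m(\eta_1+\cdots+\eta_m)$, which lies in $F$ by convexity, and an induction on $m$ using Lemma \ref{rangesum} (the $\eta_i$ are all psd) gives $\im(\theta_0)=\im(\eta_1)+\cdots+\im(\eta_m)=U$.

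Now take an arbitrary $\gamma\in\scrF(U)$; I want $\gamma\in F$. We have $\gamma\in S$, $\theta_0\succeq0$ and $\im(\gamma)\subset U=\im(\theta_0)$, so Lemma \ref{epsabzin} supplies $\epsilon>0$ with $\theta_0-\epsilon\gamma\succeq0$; after shrinking $\epsilon$ we may assume $0<\epsilon<1$ (a smaller positive $\epsilon'$ still works, as $\theta_0-\epsilon'\gamma$ is a nonnegative combination of $\theta_0\succeq0$ and $\theta_0-\epsilon\gamma\succeq0$). Put $\theta_1:=\frac1{1-\epsilon}(\theta_0-\epsilon\gamma)$. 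Then $\theta_1\succeq0$, and $\theta_1\in L$ because it is the affine combination $\frac1{1-\epsilon}\theta_0-\frac\epsilon{1-\epsilon}\gamma$ of the points $\theta_0,\gamma\in L$ (coefficients summing to $1$); hence $\theta_1\in S$. By construction $\theta_0=(1-\epsilon)\theta_1+\epsilon\gamma$ with $\theta_1,\gamma\in S$, $0<\epsilon<1$, and $\theta_0\in F$. Since $F$ is a face, this forces $\gamma\in F$ (and $\theta_1\in F$), which proves $\scrF(U)\subset F$ and hence $F=\scrF(U)$.

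The only genuinely nontrivial ingredient is the construction of $\theta_0$, i.e.\ the existence in a face of a point whose range dominates all others; once that is in hand, everything reduces to bookkeeping with the affine structure of $L$ and the two linear-algebra lemmas \ref{rangesum} and \ref{epsabzin}, so I do not expect a real obstacle here.
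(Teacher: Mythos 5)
Your proof is correct and follows essentially the same route as the paper: averaging finitely many elements of $F$ to get a point $\theta_0\in F$ with $\im(\theta_0)=\scrU(F)$ (via Lemma \ref{rangesum}), and then using Lemma \ref{epsabzin} to write $\theta_0$ as a convex combination of an arbitrary $\gamma\in\scrF(U)$ and another point of $S$, so that the face property forces $\gamma\in F$. Your $\theta_1=\frac1{1-\epsilon}(\theta_0-\epsilon\gamma)$ is just the paper's $\theta'=\theta_0-t(\gamma-\theta_0)$ with $t=\epsilon/(1-\epsilon)$, so the argument is the same up to reparametrization.
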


\begin{proof}
The inclusion $F\subset\scrF(\scrU(F))$ is trivial.
Conversely there exist finitely many $\theta_1,\dots,\theta_m\in F$
with $\scrU(F)=\sum_{i=1}^m\im(\theta_i)$.
Hence there exists a single $\theta\in F$ with $\scrU(F)=
\im(\theta)$, e.g.\ $\theta=\frac1m\sum_{i=1}^m\theta_i$ (Lemma
\ref{rangesum}). In order to prove $\scrF(\im(\theta))\subset F$ let
$\gamma\in\scrF(\im(\theta))$, so $\gamma\in S$ and $\im(\gamma)
\subset\im(\theta)$. Choose a real number $t>0$ so that $\theta':=
\theta-t(\gamma-\theta)\succeq0$, using Lemma \ref{epsabzin}.
Since $\theta'\in S$ and $\theta$ is a convex combination of
$\theta'$ and $\gamma$, we conclude that $\gamma\in F$.
\end{proof}

\begin{dfn}\label{dfnsat}%
We say that a linear subspace $U$ of $V$ is \emph{facial}, or a
\emph{face subspace} (for the given spectrahedron
$S=L\cap\sfS_2^\plus V$), if there exists $\theta\in S$ with
$U=\im(\theta)$.
\end{dfn}

The following lemma is obvious (cf.\ \ref{rangesum}):

\begin{lem}
If $U,\,U'\subset V$ are face subspaces for $S$ then so is their
sum $U+U'$.
\qed
\end{lem}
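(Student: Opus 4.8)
The plan is to produce a single element of $S$ whose range is exactly $U+U'$. By Definition \ref{dfnsat} we may fix $\theta,\,\theta'\in S$ with $U=\im(\theta)$ and $U'=\im(\theta')$. The natural witnessing element is the midpoint $\eta:=\frac12(\theta+\theta')$ (any strict convex combination $t\theta+(1-t)\theta'$ with $0<t<1$ would do equally well).

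First I would verify that $\eta\in S$. Since $S=L\cap\sfS_2^\plus V$ and $L$ is affine-linear, $\eta$ is an affine combination of $\theta,\,\theta'\in L$ and hence lies in $L$; and since $\theta,\,\theta'\succeq0$, their sum is psd, so $\eta\succeq0$. Thus $\eta\in S$, and consequently $\im(\eta)$ is a face subspace for $S$.

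It then remains to identify $\im(\eta)$. Rescaling a symmetric tensor by the positive factor $\frac12$ does not change its range, so $\im(\eta)=\im(\theta+\theta')$, and since $\theta,\,\theta'$ are psd, Lemma \ref{rangesum} yields $\im(\theta+\theta')=\im(\theta)+\im(\theta')=U+U'$. Hence $U+U'=\im(\eta)$ is a face subspace, which is the assertion.

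There is essentially no obstacle: all the content is carried by Lemma \ref{rangesum}, and the only thing needing a word of justification is that the convex combination $\eta$ stays inside the affine slice $L$ and inside the psd cone $\sfS_2^\plus V$, both of which are immediate from the definitions.
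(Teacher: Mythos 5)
Your proof is correct and follows exactly the route the paper intends: the paper declares the lemma obvious with a pointer to Lemma \ref{rangesum}, and your argument --- take $\theta,\theta'\in S$ with ranges $U,U'$, pass to the midpoint $\frac12(\theta+\theta')$ to stay in $L\cap\sfS_2^\plus V$, and apply Lemma \ref{rangesum} --- is precisely the intended fleshing-out of that remark.
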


Note that the intersection $U\cap U'$ need not contain any face
subspace.

\begin{prop}
There is a natural inclusion-preserving bijection between the
nonempty faces $F$ of $S$ and the face subspaces $U\subset V$ for
$S$, given by $F\mapsto\scrU(F)$. The inverse is $U\mapsto\scrF(U)$.
\end{prop}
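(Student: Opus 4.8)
The plan is to verify directly that the two assignments $F\mapsto\scrU(F)$ and $U\mapsto\scrF(U)$ are mutually inverse and that both preserve inclusions; essentially all the substantive content has already been extracted in Lemma \ref{faceslem}, so what remains is bookkeeping.

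First I would check that the two maps take values in the claimed sets. The map $F\mapsto\scrU(F)$ does land among the face subspaces: for a nonempty face $F$, the proof of Lemma \ref{faceslem} produces a single $\theta\in F$ with $\scrU(F)=\im(\theta)$, so $\scrU(F)$ is facial in the sense of Definition \ref{dfnsat}. Conversely $U\mapsto\scrF(U)$ lands among the nonempty faces: $\scrF(U)=L\cap\sfS_2^\plus U$ is a face of $S$ by \ref{facespaces}, and if $U=\im(\theta)$ for some $\theta\in S$ then $\theta\in\scrF(U)$, so this face is nonempty.

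Next I would verify the two round trips. The identity $\scrF(\scrU(F))=F$ for nonempty faces $F$ is precisely Lemma \ref{faceslem}. For the composite in the other order, let $U$ be a face subspace. The inclusion $\scrU(\scrF(U))\subset U$ is immediate, since every $\theta\in\scrF(U)$ satisfies $\im(\theta)\subset U$ by the very definition of $\scrF(U)$. For the reverse inclusion I would use that $U$ is facial: choosing $\theta\in S$ with $\im(\theta)=U$, the relation $\im(\theta)\subset U$ gives $\theta\in\scrF(U)$, hence $U=\im(\theta)\subset\scrU(\scrF(U))$. This is the only place where the hypothesis that $U$ be a face subspace (rather than an arbitrary linear subspace of $V$) actually enters, and it is the step I would be most careful with.

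Finally, monotonicity is formal: $F\subset F'$ clearly forces $\scrU(F)\subset\scrU(F')$, and $U\subset U'$ clearly forces $\scrF(U)\subset\scrF(U')$, since the defining condition $\im(\theta)\subset U$ only becomes weaker as $U$ grows. As the two maps are mutually inverse and each is inclusion-preserving, they constitute an isomorphism of partially ordered sets, which is exactly the assertion. I do not expect any genuine obstacle here; the only subtlety is to remember that $\scrU(\scrF(U))=U$ can fail if $U$ is not required to be facial.
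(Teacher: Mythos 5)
Your argument is correct and is essentially the paper's own proof: both directions of the bijection rest on Lemma \ref{faceslem} (and the single $\theta\in F$ with $\im(\theta)=\scrU(F)$ produced in its proof), and the identity $\scrU(\scrF(U))=U$ for facial $U$ is verified exactly as in the paper, with the facial hypothesis entering only in the inclusion $U\subset\scrU(\scrF(U))$. The extra checks you add (nonemptiness of $\scrF(U)$ and monotonicity of both maps) are harmless bookkeeping that the paper leaves implicit.
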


\begin{proof}
Let $F\ne\emptyset$ be a face of $S$. As in the proof of Lemma
\ref{faceslem}, there is $\theta\in F$ with $\im(\theta)=\scrU(F)$.
Hence the subspace $\scrU(F)$ of $V$ is facial, and
$F=\scrF(\scrU(F))$ holds by \ref{faceslem}. On the other hand, if
$U\subset V$ is a face subspace then $U=\scrU(\scrF(U))$ holds.
Indeed, $\supset$ is tautologically true. Conversely there is
$\theta\in S$ with $U=\im(\theta)$, since $U$ is facial, so we have
$\theta\in\scrF(U)$
and therefore $U\subset\scrU(\scrF(U))=\sum_{\gamma\in\scrF(U)}
\im(\gamma)$.
\end{proof}

In particular we see:

\begin{cor}\label{relint}%
If $U\subset V$ is a face subspace, the relative interior of
$\scrF(U)$ is $\{\theta\in S\colon\im(\theta)=U\}$. The supporting
face of $\theta\in S$ is $\scrF(\im(\theta))$.
\qed
\end{cor}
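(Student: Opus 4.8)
The plan is to deduce both assertions from the face--subspace bijection $F\mapsto\scrU(F)$ of the preceding Proposition together with Lemma \ref{rangesum}. I would first prove the second statement, that the supporting face of $\theta\in S$ is $\scrF(\im(\theta))$; the first statement then follows formally.

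So let $F$ be the supporting face of $\theta$, i.e.\ the unique face of $S$ with $\theta\in\relint(F)$. By Lemma \ref{faceslem} we have $F=\scrF(\scrU(F))$, so it is enough to show $\scrU(F)=\im(\theta)$. The inclusion $\im(\theta)\subseteq\scrU(F)$ is clear from $\theta\in F$. For the reverse inclusion fix an arbitrary $\gamma\in F$. Since $\theta$ lies in the relative interior of the convex set $F$, the segment from $\gamma$ through $\theta$ can be prolonged a little beyond $\theta$ while staying in $F$: there exist $\theta'\in F$ and a real number $\epsilon>0$ with $\theta=\frac1{1+\epsilon}\theta'+\frac\epsilon{1+\epsilon}\gamma$. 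Both $\theta'$ and $\gamma$ are psd (they lie in $S$), and ranges are unchanged by positive scaling, so Lemma \ref{rangesum} yields $\im(\theta)=\im(\theta')+\im(\gamma)\supseteq\im(\gamma)$. As $\gamma\in F$ was arbitrary, $\scrU(F)=\sum_{\gamma\in F}\im(\gamma)\subseteq\im(\theta)$, whence equality and $F=\scrF(\im(\theta))$.

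For the first assertion, recall that for a face $G$ of $S$ one has $\theta\in\relint(G)$ precisely when $G$ is the supporting face of $\theta$. Hence, for a face subspace $U$ and $\theta\in S$, the condition $\theta\in\relint(\scrF(U))$ is equivalent to $\scrF(\im(\theta))=\scrF(U)$. Now $\im(\theta)$ is itself a face subspace (Definition \ref{dfnsat}), and the map $\scrF$ is injective on face subspaces, since $U'=\scrU(\scrF(U'))$ for every face subspace $U'$ by the Proposition. Therefore $\scrF(\im(\theta))=\scrF(U)$ holds iff $\im(\theta)=U$, which gives $\relint(\scrF(U))=\{\theta\in S\colon\im(\theta)=U\}$.

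I do not expect a genuine obstacle here. The only step needing slight care is the prolongation of the segment beyond $\theta$, which is the standard description of the relative interior of a convex set; it must simply be combined with Lemma \ref{rangesum} in the right direction, so that $\im(\theta)$ absorbs $\im(\gamma)$ for every $\gamma$ in the face. The remainder is purely formal bookkeeping with the already-established correspondence between faces and face subspaces.
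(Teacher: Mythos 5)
Your argument is correct and follows exactly the route the paper intends: the corollary is stated without proof as an immediate consequence of the face--subspace bijection, and your derivation (supporting face of $\theta$ equals $\scrF(\im(\theta))$ via Lemma \ref{faceslem}, Lemma \ref{rangesum} and the standard prolongation property of relative interiors, then the first assertion by injectivity of $\scrF$ on face subspaces) is precisely the bookkeeping being left to the reader. The only cosmetic remark is that the prolongation beyond $\theta$ could equally be justified by Lemma \ref{epsabzin}, as in the proof of Lemma \ref{faceslem}, but your relint argument is equally valid.
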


\begin{cor}
Let $F$ be a face of $S$. Then $\rk(\theta)=\dim\scrU(F)$ for every
$\theta\in\relint(F)$. We call this number the \emph{rank} of~$F$,
denoted $\rk(F)$. If $F'$ is a proper subface of $F$ then
$\rk(F')<\rk(F)$.
\qed
\end{cor}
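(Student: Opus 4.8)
The plan is to read off both assertions from the inclusion-preserving bijection between nonempty faces of $S$ and face subspaces of $V$, combined with Corollary \ref{relint}.

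First I would settle the well-definedness of $\rk(F)$. By Lemma \ref{faceslem} the face $F$ equals $\scrF(U)$ for the face subspace $U=\scrU(F)$. Corollary \ref{relint} then says that $\relint(F)=\relint(\scrF(U))$ consists exactly of those $\theta\in S$ with $\im(\theta)=U$. Hence every $\theta\in\relint(F)$ satisfies $\rk(\theta)=\dim\im(\theta)=\dim U=\dim\scrU(F)$; in particular this value does not depend on the chosen $\theta\in\relint(F)$, so it legitimately defines the number $\rk(F)$.

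For the strict inequality, let $F'$ be a proper (nonempty) subface of $F$. From $F'\subset F$ and the definition $\scrU(T)=\sum_{\theta\in T}\im(\theta)$ one gets $\scrU(F')\subset\scrU(F)$ immediately. I would then show this inclusion is proper: if $\scrU(F')=\scrU(F)$, applying the inverse map $U\mapsto\scrF(U)$ of the face/face-subspace bijection established above yields $F'=\scrF(\scrU(F'))=\scrF(\scrU(F))=F$, contradicting $F'\subsetneq F$. Therefore $\scrU(F')\subsetneq\scrU(F)$, and taking dimensions gives $\rk(F')=\dim\scrU(F')<\dim\scrU(F)=\rk(F)$.

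There is no real obstacle here: all the content has been packed into the face/face-subspace dictionary (Lemma \ref{faceslem} and the preceding Proposition) and into Corollary \ref{relint}. The only points requiring a little care are that $\rk(F')$ only makes sense once $F'$ is taken to be nonempty, and that one must use the injectivity of $F\mapsto\scrU(F)$ to upgrade the inclusion $\scrU(F')\subset\scrU(F)$ to a strict one.
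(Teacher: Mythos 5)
Your argument is correct and is exactly the intended derivation: the paper states this corollary without proof because it follows immediately from the face/face-subspace bijection together with Corollary \ref{relint}, which is precisely how you argue both the well-definedness of $\rk(F)$ and, via injectivity of $F\mapsto\scrU(F)$, the strict inequality $\rk(F')<\rk(F)$. The only (standard) point left tacit is that a nonempty proper subface $F'$ of $F$ is again a face of $S$, so the bijection applies to it; this is routine and clearly within the spirit of the paper's omitted proof.
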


Here are equivalent characterizations of face subspaces:

\begin{prop}\label{chaktsatur}%
For a linear subspace $U\subset V$, the following are equivalent:
\begin{itemize}
\item[(i)]
$U$ is facial, i.e.\ there is $\theta\in S$ with $\im(\theta)=U$;
\item[(ii)]
$U$ has a linear basis $u_1,\dots,u_r$ for which $\sum_{i=1}^r
u_i\otimes u_i\in S$;
\item[(iii)]
$U$ is linearly spanned by vectors $u_1,\dots,u_r$ for which
$\sum_{i=1}^ru_i\otimes u_i\in S$;
\item[(iv)]
for every $u\in U$ there are $\epsilon>0$ and $u_2,\dots,u_r\in U$
such that $\epsilon u\otimes u+\sum_{i=2}^ru_i\otimes u_i\in S$.
\end{itemize}
\end{prop}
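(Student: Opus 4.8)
The plan is to establish two cycles of implications, $(i)\To(ii)\To(iii)\To(i)$ and $(i)\To(iv)\To(i)$, each step resting only on the elementary lemmas above together with the diagonalizability of symmetric tensors.

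For $(i)\To(ii)$ I would take $\theta\in S$ with $\im(\theta)=U$, observe $\theta\succeq0$, and diagonalize it as $\theta=\sum_{i=1}^ru_i\otimes u_i$ with $u_1,\dots,u_r\in V$ linearly independent; since linear independence forces $\im(\theta)=\spn(u_1,\dots,u_r)$, the tuple $(u_1,\dots,u_r)$ is a basis of $U$ with $\sum_iu_i\otimes u_i=\theta\in S$. The implication $(ii)\To(iii)$ is immediate. For $(iii)\To(i)$: writing $\theta:=\sum_{i=1}^ru_i\otimes u_i\in S$, each rank-one summand $u_i\otimes u_i$ is psd with range $\spn(u_i)$, so iterating Lemma \ref{rangesum} gives $\im(\theta)=\sum_{i=1}^r\spn(u_i)=U$, and hence $U$ is facial.

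For $(i)\To(iv)$ I would fix $\theta\in S$ with $\im(\theta)=U$ and let $u\in U$; since $u\otimes u$ is psd with $\im(u\otimes u)=\spn(u)\subset\im(\theta)$, Lemma \ref{epsabzin} supplies $\epsilon>0$ with $\gamma:=\theta-\epsilon\,u\otimes u\succeq0$. Applying Lemma \ref{rangesum} to $\theta=\epsilon\,u\otimes u+\gamma$ yields $\im(\gamma)\subset U$, so $\gamma\in\sfS_2U$ by Lemma \ref{verwende}; diagonalizing $\gamma$ inside $U$ writes $\gamma=\sum_{i=2}^ru_i\otimes u_i$ with $u_2,\dots,u_r\in U$, and then $\epsilon\,u\otimes u+\sum_{i=2}^ru_i\otimes u_i=\theta\in S$, which is exactly $(iv)$. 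For $(iv)\To(i)$: choose a basis $w_1,\dots,w_k$ of $U$, apply $(iv)$ to each $w_j$ to obtain $\sigma_j\in S$ all of whose rank-one terms lie in $U$, so $\sigma_j\in\sfS_2U$ and $\spn(w_j)\subset\im(\sigma_j)\subset U$ by Lemmas \ref{verwende} and \ref{rangesum}; then $\sigma:=\frac1k\sum_j\sigma_j\in S$ by convexity of $S$, and Lemma \ref{rangesum} gives $\im(\sigma)=\sum_j\im(\sigma_j)=U$, so $U$ is facial.

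I do not expect a genuine obstacle here: the only step carrying any content is $(iii)\To(i)$, namely the fact that the range of a sum of squares $\sum_iu_i\otimes u_i$ equals $\spn(u_1,\dots,u_r)$ even when the $u_i$ are linearly dependent, and this is already packaged in Lemma \ref{rangesum}. The main point to watch is to use Lemma \ref{rangesum} consistently to keep track of ranges of sums of psd tensors, and to invoke Lemma \ref{verwende} to descend to $\sfS_2U$ before diagonalizing inside $U$.
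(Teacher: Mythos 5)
Your proof is correct and uses essentially the same ingredients as the paper (diagonalization of psd tensors, Lemmas \ref{verwende}, \ref{rangesum}, \ref{epsabzin}, and averaging over a spanning family as in Lemma \ref{faceslem}); the only difference is cosmetic, namely that you organize the equivalences as the two loops $(i)\To(ii)\To(iii)\To(i)$ and $(i)\To(iv)\To(i)$, whereas the paper runs the single cycle $(i)\To(ii)\To(iii)\To(iv)\To(i)$, with your $(i)\To(iv)$ argument appearing there as the step $(iii)\To(iv)$.
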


\begin{proof}
(i) $\To$ (ii):
Let $\theta\in S$ with $\im(\theta)=U$. By Lemma \ref{verwende} we
can write $\theta=\sum_{i=1}^ru_i\otimes u_i$ where $u_1,\dots,u_r\in
U$ are linearly independent. Since the $u_i$ span $\im(\theta)$, they
are a linear basis of $U$.

(ii) $\To$ (iii) is trivial.

(iii) $\To$ (iv):
Let $\theta=\sum_{i=1}^ru_i\otimes u_i\in S$ as in (iii). Then
$\im(\theta)=\spn(u_1,\dots,u_r)=U$ by Lemma \ref{rangesum}.
Given $u\in U$ there exists $\epsilon>0$ such that $\gamma:=
\theta-\epsilon u\otimes u\in\sfS_2^\plus U$ (Lemma \ref{epsabzin}).
Hence there exist $u_2,\dots,u_r\in U$ with $\gamma=\sum_{i=2}^r
u_i\otimes u_i$.

(iv) $\To$ (i):
Let $u\in U$, and let $\gamma=\epsilon u\otimes u+\sum_{i=2}^r
u_i\otimes u_i$ be as in (iv). Then $u\in\im(\gamma)$ (Lemma
\ref{rangesum}),
and $\im(\gamma)\subset U$. This shows that there is a (finite)
family of tensors $\gamma_j\in\scrF(U)$ with $\sum_j\im(\gamma_j)=U$.
Hence $U$ is facial.
\end{proof}

\begin{prop}\label{dimfacaff}%
Let $S=L\cap\sfS_2^\plus V$, with $L\subset\sfS_2V$ an affine-linear
subspace. If $F$ is a nonempty face of $S$ and $U=\scrU(F)$, then
$\aff(F)=L\cap\sfS_2U$. In particular, $\dim(F)=\dim(L\cap\sfS_2U)$.
\end{prop}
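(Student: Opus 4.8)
The plan is to establish the set equality $\aff(F)=L\cap\sfS_2U$ by proving the two inclusions separately; the dimension statement then follows immediately, since the dimension of a nonempty convex set equals the dimension of its affine hull.

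The inclusion $\aff(F)\subset L\cap\sfS_2U$ is the soft one. By Lemma~\ref{faceslem} we have $F=\scrF(U)=L\cap\sfS_2^\plus U$, so in particular $F\subset L$ and $F\subset\sfS_2U$. Since $L\cap\sfS_2U$ is an affine-linear subspace of $\sfS_2V$ containing $F$, it contains $\aff(F)$.

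For the reverse inclusion $L\cap\sfS_2U\subset\aff(F)$ the idea is to start from a relative interior point of $F$ and push a short segment in the direction of an arbitrary target tensor. I would fix $\theta_0\in\relint(F)$, which exists because $F\ne\emptyset$; by Corollary~\ref{relint} it satisfies $\im(\theta_0)=\scrU(F)=U$, and hence $\theta_0\in\sfS_2U$ by Lemma~\ref{verwende}. Now let $\eta\in L\cap\sfS_2U$ be arbitrary and set $\gamma:=\theta_0-\eta\in\sfS_2U$, so that $\im(\gamma)\subset U=\im(\theta_0)$. Then Lemma~\ref{epsabzin} yields $\epsilon>0$ with $\theta_1:=\theta_0-\epsilon\gamma=(1-\epsilon)\theta_0+\epsilon\eta\succeq0$. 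As $\theta_1$ is an affine combination of $\theta_0,\eta\in L$ it lies in $L$, and as $\theta_0,\eta\in\sfS_2U$ it lies in $\sfS_2U$; hence $\theta_1\in L\cap\sfS_2^\plus U=\scrF(U)=F$. Finally $\eta=\frac1\epsilon\theta_1-\frac{1-\epsilon}\epsilon\theta_0$ is an affine combination of the two points $\theta_0,\theta_1\in F$, so $\eta\in\aff(F)$.

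I expect essentially no hard step here; the one point that needs care is that $\eta$ need not itself be psd, so one cannot just move along the segment $[\theta_0,\eta]$ and appeal to convexity. The genuine input is that $\theta_0$, viewed as an element of $\sfS_2U$, has full rank on $U$, which is precisely what makes the hypothesis $\im(\theta_0-\eta)\subset\im(\theta_0)$ of Lemma~\ref{epsabzin} available and lets the perturbation stay psd on \emph{both} sides of $\theta_0$. Everything else is elementary affine bookkeeping.
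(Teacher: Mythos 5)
Your proposal is correct and takes essentially the same route as the paper: both arguments fix $\theta_0\in\relint(F)$ with $\im(\theta_0)=U$ and use Lemma~\ref{epsabzin} to perturb from $\theta_0$ toward an arbitrary $\eta\in L\cap\sfS_2U$ while remaining psd, the easy inclusion $\aff(F)\subset L\cap\sfS_2U$ being handled identically. The only cosmetic difference is that the paper perturbs in both directions and uses the face property of $F$ to place the perturbed points in $F$, whereas you perturb one-sidedly and invoke $F=\scrF(U)=L\cap\sfS_2^\plus U$ from Lemma~\ref{faceslem}; both are valid.
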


\begin{proof}
Here $\aff(F)$ denotes the affine-linear hull of $F$. Since
$F=L\cap\sfS_2^\plus U$,
it is clear that $\aff(F)\subset L\cap\sfS_2U$. For the other
inclusion let $\theta\in\relint(F)$, so $\im(\theta)=U$
(\ref{relint}), and let $\gamma\in L\cap\sfS_2U$ be arbitrary. Then
$\gamma_t:=(1-t)\theta+t\gamma\succeq0$ for $|t|<\epsilon$ and small
$\epsilon>0$ (\ref{epsabzin}), and therefore $\gamma_t\in S$ for
these $t$.
Since $\theta=\frac12(\gamma_t+\gamma_{-t})$, these $\gamma_t$ lie in
$F$, and we have proved $\gamma\in\aff(F)$.
\end{proof}

The following result is due to Pataki \cite{pa}. It describes the
interval in which the ranks of the extreme points of a spectrahedron
can possibly lie:

\begin{prop}\label{patineq}%
\emph{(Pataki inequalities})
Let $\dim(V)=n$, let $L\subset\sfS_2V$ be an affine subspace with
$\dim(L)=m$, and let $S=L\cap\sfS_2^\plus V$.
\begin{itemize}
\item[(a)]
For every extreme point $\theta$ of $S$, the rank $\rk(\theta)=r$
satisfies
$$m+\choose{r+1}2\>\le\>\choose{n+1}2.$$
\item[(b)]
When $L$ is chosen generically among all affine subspaces of
dimension $m$, every $\theta\in L\cap\sfS_2^\plus V$ satisfies
$m\ge\choose{n-\rk(\theta)+1}2$.
\end{itemize}
\end{prop}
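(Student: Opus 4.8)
The plan is to deduce both inequalities from Proposition~\ref{dimfacaff} by elementary linear algebra, with part~(b) additionally requiring a short incidence-variety dimension count; throughout write $N=\dim\sfS_2V=\choose{n+1}2$. For (a), recall that $\theta\in S$ is an extreme point exactly when its supporting face is the singleton $\{\theta\}$, which in particular has dimension $0$. Put $U=\im(\theta)$, so $r=\rk(\theta)=\dim(U)$ and $\theta\in\sfS_2U$ by Lemma~\ref{verwende}; then Proposition~\ref{dimfacaff} gives $\dim(L\cap\sfS_2U)=0$. Since $\sfS_2U$ is linear, $L$ is affine, and $\theta$ lies in both, the set $L\cap\sfS_2U$ is the translate by $\theta$ of $L_0\cap\sfS_2U$, where $L_0\subset\sfS_2V$ is the $m$-dimensional linear subspace parallel to $L$; hence $L_0\cap\sfS_2U=\{0\}$. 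Two subspaces of $\sfS_2V$ meeting only in $0$ have dimensions summing to at most $N$, and $\dim\sfS_2U=\choose{r+1}2$, so $m+\choose{r+1}2\le N=\choose{n+1}2$. I expect no difficulty in this part.

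For (b), for $0\le r\le n$ let $X_r=\{\theta\in\sfS_2^\plus V\colon\rk(\theta)\le r\}$, a closed semialgebraic subset of $\sfS_2V$; by Lemma~\ref{rangesum} it is the image of the map $V^r\to\sfS_2V$, $(v_1,\dots,v_r)\mapsto\sum_{i=1}^rv_i\otimes v_i$. Over a psd tensor of rank exactly $r$ the fibre of this map is a single (free) orbit of the orthogonal group $O(r)$, of dimension $\choose r2$; hence $\dim X_r=nr-\choose r2$, which a direct computation rewrites as $N-\choose{n-r+1}2$. Consequently, as soon as $\dim X_r+m<N$, i.e.\ $m<\choose{n-r+1}2$, a generic $m$-dimensional affine subspace of $\sfS_2V$ misses $X_r$. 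Choosing $L$ generic enough that $L\cap X_r=\emptyset$ for every one of the finitely many $r$ with $m<\choose{n-r+1}2$, we conclude that every $\theta\in S=L\cap\sfS_2^\plus V$ has rank $s$ with $m\ge\choose{n-s+1}2$: otherwise, taking $s=\rk(\theta)$, the point $\theta$ would lie in the empty set $L\cap X_s$.

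The one step requiring care is the genericity assertion in (b): one must make ``generic affine subspace'' precise and verify, via the incidence variety $\{(L,\theta)\colon\theta\in L\cap X_r\}$ (whose fibre over a fixed $\theta$ consists of the affine $m$-planes through $\theta$), that the affine $m$-planes meeting a fixed $d$-dimensional set form a subset of dimension at most $d+m(N-m)$ inside the $(m+1)(N-m)$-dimensional space of all affine $m$-planes, hence a proper subset once $d+m<N$. In the semialgebraic category this is routine but should be spelled out, and I expect this bookkeeping, rather than anything in part~(a), to be the main thing to pin down; the value $\dim X_r=nr-\choose r2$ is just the classical dimension of the rank stratification of symmetric matrices. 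Together, (a) and (b) confine the rank $r$ of an extreme point of a generic spectrahedron to the range where both $\choose{r+1}2\le N-m$ and $\choose{n-r+1}2\le m$ hold, which is the Pataki range.
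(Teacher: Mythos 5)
Your argument is correct, but it cannot be compared to a proof in the paper because the paper gives none: Proposition~\ref{patineq} is quoted as a known result, with the formulation taken from \cite{cpsv} and proofs referred to Pataki \cite{pa} and \cite{nrs}. What you supply is a self-contained derivation, and it fits the paper's framework well. For (a), your reduction is exactly the natural one inside the coordinate-free Ramana--Goldman machinery of Section~2: an extreme point has supporting face $\{\theta\}$, Proposition~\ref{dimfacaff} gives $\dim(L\cap\sfS_2U)=0$ for $U=\im(\theta)$, and since $\theta$ lies in both $L$ and $\sfS_2U$ the direction space $L_0$ meets $\sfS_2U$ only in $0$, whence $m+\choose{r+1}{2}\le\choose{n+1}{2}$; this is clean and complete. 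For (b), your dimension count on the rank stratum $X_r$ is the standard route; the identity $nr-\choose{r}{2}=\choose{n+1}{2}-\choose{n-r+1}{2}$ is the classical dimension of the symmetric determinantal variety, and the genericity claim follows from the incidence-variety bound you sketch. Two points deserve to be made explicit if this were written out: first, the upper bound $\dim X_r\le nr-\choose{r}{2}$ needs either the classical determinantal-variety dimension or the fibre-dimension theorem for semialgebraic maps applied stratum by stratum (the fibre over a point of rank $s<r$ is larger than an $O(r)$-orbit, so one should argue via the rank-exactly-$s$ strata for $s\le r$); second, ``generic'' must be taken to mean avoiding the finitely many bad loci for all relevant $r$ simultaneously, and the transversality statement should be phrased in the semialgebraic category, as you note. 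What your approach buys is independence from the cited literature, at the cost of this routine but nontrivial bookkeeping; the paper's choice to cite \cite{pa}, \cite{cpsv}, \cite{nrs} avoids it entirely.
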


This formulation is taken from \cite{cpsv} Proposition 3.1. See also
\cite{pa} Corollary 3.3.4
and \cite{nrs} Proposition~5.

\begin{rem}\label{dfnpatrang}%
Let $S=L\cap\sfS_2^\plus V$, where $\dim(V)=n$ and $L\subset\sfS_2V$
is a nonempty affine subspace, $\dim(L)=m$.
The \emph{Pataki interval} for the rank $r$ of extreme points of $S$
is described by the inequalities
\begin{equation}\label{patakineqs}%
m\>\ge\>\choose{n-r+1}2\text{ \ and \ }m+\choose{r+1}2\>\le\>
\choose{n+1}2
\end{equation}
from Proposition \ref{patineq}. This amounts to the range of integers
$r$ satisfying
$$n+\frac12-\frac12\sqrt{8m+1}\>\le\>r\>\le\>-\frac12+\frac12
\sqrt{(2n+1)^2-8m}.$$
Indeed, the first (resp.\ second) inequality in \eqref{patakineqs}
says $A_1\le r\le A_2$ (resp.\ $B_1\le r\le B_2$) where
$$A_i\>=\>n+\frac12+\frac{(-1)^i}2\sqrt{8m+1},\quad B_i\>=\>
-\frac12+\frac{(-1)^i}2\sqrt{(2n+1)^2-8m}$$
($i=1,2$).
It is elementary to check that $B_1<0<A_1\le B_2<A_2$ holds.
Therefore the Pataki interval is $\lceil A_1\rceil\le r\le
\lfloor B_2\rfloor$.
\end{rem}


\section{Gram spectrahedra}

See Choi-Lam-Reznick \cite{clr} for an introduction to Gram matrices
of real polynomials, and Chua-Plaumann-Sinn-Vinzant \cite{cpsv} for a
survey on Gram spectrahedra. In contrast to these texts we emphasize
a coordinate-free approach.

\begin{lab}
Let $A$ be an $\R$-algebra. The multiplication map $A\otimes
A\to A$, $(a,b)\mapsto ab$ (with $\otimes=\otimes_\R$ always)
induces the $\R$-linear map $\mu\colon\sfS_2A\to A$, where
$\sfS_2A\subset A\otimes A$ is the space of symmetric tensors as in
Section~\ref{sectspec}. Given $f\in A$, the symmetric tensors
$\theta\in\sfS_2A$ with $\mu(\theta)=f$ are called the \emph{Gram
tensors} of~$f$.
\end{lab}

\begin{lab}\label{dfngramspec}%
Let $V\subset A$ be a finite-dimensional linear subspace, and let
$f\in A$. We define the \emph{Gram spectrahedron} of $f$, relative
to $V$, to be the set of all psd Gram tensors of $f$ in $\sfS_2V$,
i.e.
$$\Gram_V(f)\>:=\>\sfS_2^\plus V\cap\mu^{-1}(f).$$
It is well-known that $\Gram_V(f)$ parametrizes the sums of squares
representations $f=\sum_{i=1}^rp_i^2$ with $p_i\in V$ for all~$i$,
up to orthogonal equivalence. This means, the elements of
$\Gram_V(f)$ are the symmetric tensors $\sum_{i=1}^rp_i\otimes
p_i$ with $r\ge0$ and $p_1,\dots,p_r\in V$ such that $\sum_{i=1}^r
p_i^2=f$. Given two such tensors $\theta=\sum_{i=1}^rp_i\otimes p_i$
and $\theta'=\sum_{j=1}^sq_j\otimes q_j$, we may assume $r=s$; then
$\theta=\theta'$ if and only if there is an orthogonal real matrix
$(u_{ij})$ such that $q_j=\sum_{i=1}^ru_{ij}p_i$ for all $j$. See
\cite{clr} \S\,2.
\end{lab}

\begin{lem}
$\Gram_V(f)$ is a spectrahedron, and is compact provided that the
identity $\sum_{i=1}^rp_i^2=0$ with $p_1,\dots,p_r\in V$ implies
$p_1=\cdots=p_r=0$.
\end{lem}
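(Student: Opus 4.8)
The plan is to establish the two assertions separately: first that $\Gram_V(f)$ is a spectrahedron, and then that it is compact under the stated hypothesis.

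For the first part, recall from \ref{dfngramspec} that $\Gram_V(f) = \sfS_2^\plus V \cap \mu^{-1}(f)$. Here $\mu\colon\sfS_2A\to A$ is $\R$-linear, so its restriction to $\sfS_2V$ is $\R$-linear, and hence $\mu^{-1}(f)\cap\sfS_2V$ is an affine-linear subspace of $\sfS_2V$ (it is nonempty exactly when $f$ admits an sos representation using forms in $V$; if empty, the statement is vacuous, the empty set being a spectrahedron). Setting $L := \mu^{-1}(f)\cap\sfS_2V$, we have $\Gram_V(f) = L\cap\sfS_2^\plus V$, which is precisely the intersection of an affine-linear subspace with the psd cone $\sfS_2^\plus V$. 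By the discussion in Section~\ref{sectspec}, after fixing a linear basis of $V$ this cone is identified with the cone of real symmetric psd matrices, so $L\cap\sfS_2^\plus V$ is a spectrahedron by definition.

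For compactness, assume that $\sum_{i=1}^r p_i^2 = 0$ with $p_i\in V$ forces all $p_i = 0$. Since $\Gram_V(f)$ is closed (being the intersection of the closed affine subspace $L$ with the closed cone $\sfS_2^\plus V$), it suffices to prove boundedness, and for this I would argue that the recession cone of $\Gram_V(f)$ is trivial. A recession direction is a $\gamma\in\sfS_2V$ with $\gamma\succeq0$ and $\mu(\gamma)=0$: indeed, if $\theta\in\Gram_V(f)$ and $\theta+t\gamma\in\Gram_V(f)$ for all $t\ge0$, then $\mu(\theta+t\gamma)=f$ gives $\mu(\gamma)=0$, and $\theta+t\gamma\succeq0$ for all $t\ge0$ forces $\gamma\succeq0$ (divide by $t$ and let $t\to\infty$, using closedness of $\sfS_2^\plus V$). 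Writing such a $\gamma$ as $\gamma=\sum_{i=1}^s q_i\otimes q_i$ with $q_i\in V$, the condition $\mu(\gamma)=0$ reads $\sum_{i=1}^s q_i^2 = 0$, whence $q_1=\cdots=q_s=0$ by hypothesis, so $\gamma=0$. A closed convex set with trivial recession cone is bounded, hence compact.

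The only mild subtlety — the part most worth stating carefully — is the passage from ``$\theta+t\gamma\in\Gram_V(f)$ for all $t\ge 0$'' to the characterization of $\gamma$, and conversely the fact that triviality of the recession cone is equivalent to compactness for a nonempty closed convex set; both are standard facts from convex geometry and require no computation. One could alternatively phrase the boundedness argument directly: the trace functional $\theta=\sum a_{ij}v_i\otimes v_j\mapsto\sum_i a_{ii}$ (for a fixed basis $v_1,\dots,v_n$ of $V$) is bounded on $\Gram_V(f)$ by a compactness/continuity argument on the unit sphere, again using the hypothesis to rule out nonzero psd tensors in the kernel of $\mu$; but the recession-cone formulation is cleaner and coordinate-free.
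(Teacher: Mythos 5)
Your proof is correct and follows essentially the same route as the paper: $\Gram_V(f)$ is by definition the intersection of the affine subspace $\mu^{-1}(f)\cap\sfS_2V$ with the cone $\sfS_2^\plus V$, and compactness is obtained by showing the recession cone is trivial, since a recession direction $\gamma\succeq0$ with $\mu(\gamma)=0$ yields $\gamma=\sum_i q_i\otimes q_i$ with $\sum_i q_i^2=0$, forcing $\gamma=0$ under the hypothesis. Your additional care about the empty case and the passage $\theta+t\gamma\succeq0\Rightarrow\gamma\succeq0$ only makes explicit what the paper leaves implicit.
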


\begin{proof}
By its definition, $\Gram_V(f)$ is a spectahedron. If $\Gram_V(f)$ is
unbounded, it has nonzero recession cone, which means that there is
$0\ne\theta\in\sfS_2V$ with $\eta+\theta\in\Gram_V(f)$ for every
$\eta\in\Gram_V(f)$. It follows that $\mu(\theta)=0$ and $\theta
\succeq0$, so $\theta=\sum_{i=1}^rp_i\otimes p_i$ with $0\ne p_i\in
V$ where $\sum_{i=1}^rp_i^2=0$.
\end{proof}

\begin{lab}
For $U\subset A$ a linear subspace let $\Sigma U^2=\{\sum_{i=1}^r
u_i^2\colon r\ge1$, $u_i\in U\}$. Usually we will consider Gram
spectrahedra only in the case where sums of squares in $A$ are
strongly stable \cite{ne}. This means that there exists a filtration
$U_1\subset U_2\subset\cdots\subset\bigcup_{i\ge1}U_i=A$ by
finite-dimensional linear subspaces $U_i$ such that for every $i\ge1$
there is $j\ge1$ with $U_i\cap\Sigma A^2\subset\Sigma U_j^2$. In this
case we simply write $\Gram(f):=\Gram_{U_j}(f)$ for $f\in U_i$.
Examples are the polynomial rings $A=\R[x_1,\dots,x_n]=\R[x]$ with
$U_i=\R[x]_{\le i}$, the space of polynomials of degree $\le i$.
\end{lab}

\begin{lab}\label{uu}%
We summarize what the formalism of Section \ref{sectspec} means.
Let $V\subset A$ be a linear subspace, $\dim(V)<\infty$, and let
$f\in A$. We will say that a linear subspace $U\subset V$ is a
\emph{face subspace for}~$f$ if $U$ is a face space for the
spectrahedron $\Gram_V(f)$ in the sense of \ref{dfnsat}. In other
words, $U$ is a face subspace for $f$ if there is $\theta\in
\Gram_V(f)$ with $U=\im(\theta)$. According to Proposition
\ref{chaktsatur}, the nonempty faces $F$ of $\Gram_V(f)$ are in
bijection with the face subspaces $U$ for $f$, via
$F\mapsto\scrU(F)$ and $U\mapsto\scrF(U)$.

The dimension formula \ref{dimfacaff} for faces takes a particularly
appealing form for Gram spectrahedra. If $U\subset A$ is a linear
subspace, let $UU$ denote the linear subspace of $A$ spanned by the
products $pp'$ ($p,\,p'\in U$).
\end{lab}

\begin{prop}
For $U\subset V$ a face subspace for $f$, the face $\scrF(U)$ of
$\Gram_V(f)$ has dimension
$$\dim\scrF(U)\>=\>\frac r2(r+1)-s$$
with $r=\dim(U)$ and $s=\dim(UU)$.
\end{prop}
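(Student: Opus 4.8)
The plan is to apply the general dimension formula from Proposition \ref{dimfacaff} to the spectrahedron $\Gram_V(f)$ and translate the abstract intersection $L\cap\sfS_2U$ into the concrete space $UU$ of products. Recall that $\Gram_V(f)=\sfS_2^\plus V\cap\mu^{-1}(f)$, so here the affine subspace is $L=\mu^{-1}(f)\cap\sfS_2V$, the fibre of the multiplication map $\mu\colon\sfS_2V\to A$ over $f$. By Proposition \ref{dimfacaff} we have $\dim\scrF(U)=\dim(L\cap\sfS_2U)$, and $L\cap\sfS_2U$ is exactly the fibre over $f$ of the restricted map $\mu|_{\sfS_2U}\colon\sfS_2U\to A$. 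Since $U$ is a face subspace for $f$, this fibre is nonempty (it contains some $\theta$ with $\im(\theta)=U$), so it is an affine subspace whose dimension equals $\dim\ker(\mu|_{\sfS_2U})=\dim\sfS_2U-\dim\im(\mu|_{\sfS_2U})$.

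Next I would identify the two terms on the right. First, $\dim\sfS_2U=\binom{r+1}2=\frac r2(r+1)$ where $r=\dim(U)$, since $\sfS_2U\cong\sfS^2U$ is the symmetric square of an $r$-dimensional space. Second, the image of $\mu|_{\sfS_2U}$ is precisely $UU$: by definition $UU$ is spanned by the products $pp'$ for $p,p'\in U$, and $\mu(p\otimes p')=\mu(p'\otimes p)=pp'$, while the symmetrized tensor $\frac12(p\otimes p'+p'\otimes p)\in\sfS_2U$ maps to $pp'$; conversely every symmetric tensor in $\sfS_2U$ is an $\R$-linear combination of such symmetrized products (indeed of terms $u_i\otimes u_i$ for a basis), hence maps into $UU$. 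Therefore $\dim\im(\mu|_{\sfS_2U})=\dim(UU)=s$. Combining, $\dim\scrF(U)=\binom{r+1}2-s=\frac r2(r+1)-s$.

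I expect no serious obstacle here; the statement is essentially a dictionary entry translating Proposition \ref{dimfacaff} into the language of products of forms. The only point requiring a word of care is the nonemptiness of the fibre $L\cap\sfS_2U$, which is what makes it a genuine affine subspace of the asserted dimension rather than the empty set; this is guaranteed precisely by the hypothesis that $U$ is a face subspace for $f$, via Corollary \ref{relint}, which furnishes a $\theta\in\Gram_V(f)$ with $\im(\theta)=U$ lying in $\relint(\scrF(U))$. One should also note in passing that the formula is independent of the ambient $V$: both $\sfS_2U$ and $UU$ depend only on $U$ (and on the algebra structure of $A$), consistent with the fact that $\scrF(U)$ itself does not see $V$ beyond the requirement $U\subset V$.
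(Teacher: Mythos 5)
Your proposal is correct and follows essentially the same route as the paper: apply Proposition \ref{dimfacaff} to identify $\dim\scrF(U)$ with $\dim(\mu^{-1}(f)\cap\sfS_2U)$, then compute this as the dimension of the kernel of the surjection $\mu\colon\sfS_2U\to UU$, giving $\frac r2(r+1)-s$. Your extra remarks on nonemptiness of the fibre and on the image of $\mu|_{\sfS_2U}$ being exactly $UU$ just make explicit what the paper leaves implicit.
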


\begin{proof}
By Proposition \ref{dimfacaff}, $\dim\scrF(U)$ is the dimension of
the affine space $\mu^{-1}(f)\cap\sfS_2U$. Hence $\dim\scrF(U)=
\dim(W)$ where $W$ is the kernel of the surjective linear map
$\mu\colon\sfS_2U\to UU$.
Since $\dim(W)=\dim(\sfS_2U)-\dim(UU)=\frac r2(r+1)-s$, the
proposition follows.
\end{proof}

\begin{cor}
Let $f=\sum_{i=1}^rp_i^2$ with $p_1,\dots,p_r\in V$ linearly
independent, let $\theta=\sum_{i=1}^rp_i\otimes p_i$ be the
corresponding Gram tensor of~$f$. The dimension of the supporting
face of $\theta$ in $\Gram_V(f)$ equals the number of independent
linear relations between the products $p_ip_j$ ($1\le i\le j\le r$).
\qed
\end{cor}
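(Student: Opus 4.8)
The plan is to obtain the statement as a direct consequence of the preceding Proposition, once one has identified the supporting face of $\theta$ via Corollary~\ref{relint}.

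First I would verify that $\theta:=\sum_{i=1}^rp_i\otimes p_i$ genuinely lies in $\Gram_V(f)$: it is a sum of the rank-one psd tensors $p_i\otimes p_i$, hence $\theta\succeq0$ in $\sfS_2V$, and $\mu(\theta)=\sum_{i=1}^rp_i^2=f$, so $\theta\in\sfS_2^\plus V\cap\mu^{-1}(f)=\Gram_V(f)$. Next I would pin down the range: since $p_1,\dots,p_r$ are linearly independent, $\im(\theta)=\spn(p_1,\dots,p_r)=:U$ (this is the remark on ranges in the opening item of Section~\ref{sectspec}, or follows from Lemma~\ref{rangesum} applied to $\theta=\sum p_i\otimes p_i$), and $\dim(U)=r$. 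By Corollary~\ref{relint}, $U$ is then a face subspace for $f$ and the supporting face of $\theta$ in $\Gram_V(f)$ is exactly $\scrF(U)$.

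Now I would apply the Proposition above to this $U$, which gives $\dim\scrF(U)=\frac r2(r+1)-s$ with $s=\dim(UU)$. Expanding a general product $pp'$ with $p,p'\in U$ as a linear combination of the $p_ip_j$ shows $UU=\spn\{p_ip_j\colon 1\le i\le j\le r\}$, a space spanned by the $\choose{r+1}2=\frac r2(r+1)$ elements $p_ip_j$. Hence $s$ equals $\choose{r+1}2$ minus the number of independent linear relations among these products, and consequently $\dim\scrF(U)=\frac r2(r+1)-s$ is precisely that number of independent linear relations, which is the claim.

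I do not expect a real obstacle here: the argument is entirely bookkeeping on top of results already in hand. The only points deserving a word of care are that $\im(\theta)$ is exactly $\spn(p_1,\dots,p_r)$ and has dimension $r$ — this is where the linear independence of the $p_i$ enters — and the elementary identity that the number of independent linear relations among $N$ vectors equals $N$ minus the dimension of their span, applied with $N=\choose{r+1}2$ to the products $p_ip_j$ ($1\le i\le j\le r$).
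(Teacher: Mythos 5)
Your argument is correct and is exactly the reasoning the paper intends: the statement is given with a \qed as an immediate consequence of the preceding Proposition, and you supply precisely the bookkeeping (that $\theta\in\Gram_V(f)$ with $\im(\theta)=\spn(p_1,\dots,p_r)=U$ of dimension $r$, so the supporting face is $\scrF(U)$ by Corollary~\ref{relint}, whence $\dim\scrF(U)=\binom{r+1}{2}-\dim(UU)$ equals the number of independent linear relations among the $p_ip_j$). No gaps; this matches the paper's route.
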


We say that a sequence $p_1,\dots,p_r$ in $A$ is \emph{quadratically
independent} if the $\choose{r+1}2$ products $p_ip_j$ ($1\le i\le
j\le r$) are linearly independent. Using this terminology we get:

\begin{cor}\label{chaktextpts}%
A psd Gram tensor $\sum_{i=1}^rp_i\otimes p_i$ of $f$, with
$p_1,\dots,p_r\in V$ linearly independent, is an extreme point of
$\Gram_V(f)$ if and only if the sequence $p_1,\dots,p_r$ is
quadratically independent.
\qed
\end{cor}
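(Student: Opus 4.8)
The plan is to deduce this immediately from the preceding corollary together with the elementary fact that the extreme points of a convex set are exactly its zero-dimensional faces. First I would recall that a point $\theta$ of $\Gram_V(f)$ is an extreme point precisely when its supporting face equals $\{\theta\}$, i.e.\ when that face has dimension $0$. By Corollary~\ref{relint} the supporting face of $\theta=\sum_{i=1}^rp_i\otimes p_i$ is $\scrF(\im(\theta))$, and since $\theta\succeq0$ with $p_1,\dots,p_r$ linearly independent we have $\im(\theta)=\spn(p_1,\dots,p_r)=:U$ by Lemma~\ref{rangesum}; in particular $U$ is a face subspace for $f$, so the face-dimension formula applies to $\scrF(U)$.

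Next I would invoke the previous corollary: $\dim\scrF(U)$ equals the number of independent linear relations among the products $p_ip_j$ ($1\le i\le j\le r$), which by the displayed formula is $\choose{r+1}2-s$ with $s=\dim(UU)$. This vanishes if and only if $s=\choose{r+1}2$, i.e.\ the $\choose{r+1}2$ products $p_ip_j$ are linearly independent, which is exactly the defining condition that $p_1,\dots,p_r$ be quadratically independent. Chaining the two equivalences — $\theta$ extreme $\iff$ $\dim\scrF(\im(\theta))=0$ $\iff$ the sequence is quadratically independent — finishes the proof.

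There is essentially no obstacle here; the statement is a formal consequence of the material just developed. The only point that requires a moment's care is that the preceding corollary (and the dimension formula behind it) presupposes that $p_1,\dots,p_r$ are linearly independent — which is part of the hypothesis — so that $\im(\theta)$ really is their span and one is free to identify $\dim\scrF(\im(\theta))$ with the kernel dimension of $\mu\colon\sfS_2U\to UU$.
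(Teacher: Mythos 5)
Your argument is correct and is exactly the route the paper intends by stating this corollary with a \qed: the supporting face of $\theta$ is $\scrF(\im(\theta))$ with $\im(\theta)=\spn(p_1,\dots,p_r)$, and by the preceding dimension formula this face is zero-dimensional (equivalently, $\theta$ is extreme) precisely when the products $p_ip_j$ admit no nontrivial linear relation, i.e.\ when $p_1,\dots,p_r$ are quadratically independent. Nothing further is needed.
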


In particular, whether or not $\theta=\sum_{i=1}^rp_i\otimes p_i$
(with the $p_i$ linearly independent) is an extreme point of
$\Gram_V(f)$, depends only on the linear subspace
$U:=\spn(p_1,\dots,p_r)$, but not on~$f=\sum_{i=1}^rp_i^2$.

\begin{cor}\label{dimgramspec}%
Let $f\in A$, let $U\subset V$ be the linear subspace generated by
all $p\in V$ with $f-p^2\in\Sigma V^2$. Then
$$\dim\Gram_V(f)\>=\>\frac r2(r+1)-s$$
where $r=\dim(U)$ and $s=\dim(UU)$.
\qed
\end{cor}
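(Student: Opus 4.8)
The plan is to identify the subspace $U$ of the statement with the face subspace $\scrU\bigl(\Gram_V(f)\bigr)$ attached to $\Gram_V(f)$, viewed as a face of itself, and then to invoke the preceding proposition on $\dim\scrF(U)$. Throughout I assume $\Gram_V(f)\ne\emptyset$, equivalently $f\in\Sigma V^2$; if $f\notin\Sigma V^2$ then no $p\in V$ satisfies $f-p^2\in\Sigma V^2$ (such a $p$ would exhibit $f$ as a sum of squares in $V$), so $U=0$ and the formula degenerates.

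The heart of the argument is the identity
\[
U\>=\>\scrU\bigl(\Gram_V(f)\bigr)\>=\>\sum_{\theta\in\Gram_V(f)}\im(\theta).
\]
For ``$\subseteq$'' I would argue as follows: if $p\in V$ with $f-p^2=\sum_iq_i^2$ for some $q_i\in V$, then $\theta:=p\otimes p+\sum_iq_i\otimes q_i$ is a psd Gram tensor of $f$, and $p\in\im(\theta)$ by Lemma \ref{rangesum} (the range of the psd tensor $\theta$ is $\spn(p,q_1,\dots)$). Hence each generator $p$ of $U$ lies in $\scrU\bigl(\Gram_V(f)\bigr)$. For ``$\supseteq$'', let $\theta=\sum_{i=1}^rp_i\otimes p_i\in\Gram_V(f)$ be arbitrary, the $p_i$ not assumed linearly independent. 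Again by Lemma \ref{rangesum}, $\im(\theta)=\spn(p_1,\dots,p_r)$; and for each $i$ one has $f-p_i^2=\sum_{j\ne i}p_j^2\in\Sigma V^2$ (valid also for $r\le1$, since $0\in\Sigma V^2$), so $p_i\in U$. Thus $\im(\theta)\subset U$, and summing over $\theta$ gives $\scrU\bigl(\Gram_V(f)\bigr)\subset U$.

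With this identity in hand, the rest is formal: $\Gram_V(f)$ is a nonempty face of itself, so by Lemma \ref{faceslem} the subspace $\scrU\bigl(\Gram_V(f)\bigr)$ is a face subspace for $f$ and $\scrF\bigl(\scrU(\Gram_V(f))\bigr)=\Gram_V(f)$; hence $U$ is a face subspace for $f$ with $\scrF(U)=\Gram_V(f)$, and the preceding proposition applied to $U$ gives $\dim\Gram_V(f)=\dim\scrF(U)=\frac r2(r+1)-s$ with $r=\dim(U)$, $s=\dim(UU)$. I expect the only point needing genuine care to be the inclusion ``$\supseteq$'' in the displayed identity: since a psd Gram tensor of $f$ is in general presented with linearly dependent components, the equality $\im(\theta)=\spn(p_1,\dots,p_r)$ must come from Lemma \ref{rangesum} (the fact that $\im(A+B)=\im(A)+\im(B)$ for psd matrices) rather than from any dimension count on the $p_i$.
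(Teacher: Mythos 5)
Your proof is correct and is essentially the argument the paper intends (the corollary is stated with \qed as an immediate consequence of the preceding dimension formula): one identifies the subspace $U$ of the statement with $\scrU(\Gram_V(f))$ and applies the formula to $\Gram_V(f)$ viewed as a face of itself. The only cosmetic remark is that the facialness of $\scrU(\Gram_V(f))$ comes from the proof of Lemma \ref{faceslem} (or the bijection proposition following it) rather than from its literal statement, which is exactly how the paper uses it as well.
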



\section{Quadratically independent binary forms}

\begin{lab}
Let $k$ be a field, let $A$ be a (commutative) $k$-algebra. If
$U\subset A$ is a $k$-linear subspace, let $UU$ denote the linear
subspace of $A$ spanned by the products $pp'$ ($p,\,p'\in U$), as in
\ref{uu}. Assuming $\dim(U)=r<\infty$, we say that $U$ is
\emph{quadratically independent} if the natural multiplication map
$\sfS_2U\to A$ is injective, i.e.\ if $\dim(UU)=\choose{r+1}2$.
A sequence $p_1,\dots,p_r$ of elements of $A$ is \emph{quadratically
independent} if the $p_i$ are a linear basis of a quadratically
independent subspace $U$ of~$A$.
\end{lab}

We will prove the following general result for binary forms:

\begin{thm}\label{quadindepallg}%
Let $k$ be an infinite field, and let $d,\,r\ge1$ such that
$\choose{r+1}2\le2d+1$. Then there exists a sequence of $r$ binary
forms of degree~$d$ over $k$ that is quadratically independent.
\end{thm}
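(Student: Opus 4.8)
The plan is to argue by induction on $r$, constructing the sequence one form at a time and showing at each stage that a generic choice of the next form preserves quadratic independence. Fix $d$ and let $n=2d+1=\dim k[x,y]_d$. Suppose $p_1,\dots,p_{r-1}$ are binary forms of degree $d$ that are quadratically independent, so the $\choose r2$ products $p_ip_j$ ($1\le i\le j\le r-1$) are linearly independent in the $(4d+1)$-dimensional space $k[x,y]_{2d}$; this requires $\choose r2\le 2d+1$, which follows from our hypothesis $\choose{r+1}2\le 2d+1$. I want to find $p_r\in k[x,y]_d$ such that the enlarged family of $\choose{r+1}2$ products stays linearly independent; equivalently, $p_r^2$ and the $r-1$ products $p_ip_r$ together with the old products are linearly independent. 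Since $k$ is infinite, it suffices to show this fails only on a proper Zariski-closed subset of the affine space $k[x,y]_d\cong k^n$, i.e.\ that the ``bad locus'' is cut out by a nonzero polynomial condition — for then a $k$-point outside it exists.

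The key step is to exhibit a single choice of $p_r$ (over an extension, or explicitly) for which independence holds, since the bad locus is defined by vanishing of all maximal minors of the matrix whose columns express the $\choose{r+1}2$ products in a basis of $k[x,y]_{2d}$, and this determinantal condition is either identically zero or defines a proper closed set — and a witness rules out the former. A natural witness: take $p_i = x^{a_i}y^{d-a_i}$ to be distinct monomials, or more robustly take $p_1,\dots,p_r$ to be powers of distinct linear forms $\ell_1^d,\dots,\ell_r^d$ with the $\ell_j$ pairwise non-proportional. Then the products $\ell_i^d\ell_j^d$ are $d$-th powers times products of linear forms, and one expects the relevant Vandermonde-type structure to force independence whenever $\choose{r+1}2\le 2d+1$. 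Concretely, writing $\ell_j = x - c_j y$, the product $\ell_i^d\ell_j^d = (x-c_iy)^d(x-c_jy)^d$ lies in the span indexed by pairs $\{i,j\}$; checking that these $\choose{r+1}2$ degree-$2d$ forms are independent amounts to a rank computation on an explicit structured matrix, which I would verify either by a direct Wronskian/Vandermonde argument or by a degeneration to monomials.

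The main obstacle is precisely producing and verifying this witness — showing that for \emph{some} admissible configuration the products are genuinely independent, rather than just generically so. The subtlety is that the naive expected dimension count ($\choose{r+1}2$ products in a $(4d+1)$-dimensional space) is not automatically achieved: products of low-degree forms can satisfy unexpected Koszul-type or apolarity relations, so one must choose the configuration carefully and prove no such relation occurs. I would handle this by reducing to a monomial or near-monomial degeneration where independence is visible by inspection of supports, then invoke semicontinuity of rank to lift back. Once the witness is in hand, the induction closes immediately: genericity over the infinite field $k$ supplies the $k$-rational form $p_r$ at each stage, and the base case $r=1$ is trivial since $\choose22=1\le 2d+1$ and any nonzero $p_1$ works.
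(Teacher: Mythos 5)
Your reduction of the theorem to ``exhibit one witness tuple; then, since the bad locus is determinantal, genericity over the infinite field $k$ finishes'' is correct, but it is also the easy part (the paper makes the same observation in one sentence before its proof). The entire content of the theorem is the witness, and your proposal does not supply one. Of the two candidates you offer, the monomial one provably cannot reach the stated range: a tuple of monomials $x^{a_1}y^{d-a_1},\dots,x^{a_r}y^{d-a_r}$ is quadratically independent precisely when all sums $a_i+a_j$ ($1\le i\le j\le r$) are distinct, i.e.\ when $\{a_1,\dots,a_r\}$ is a Sidon set in $\{0,1,\dots,d\}$; such sets have only about $\sqrt d$ elements, whereas the theorem requires $r$ up to about $2\sqrt d$. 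Already for $d=5$ the theorem allows $r=4$ (since $\choose{5}{2}=10\le 11=2d+1$), yet checking all fifteen $4$-element subsets of $\{0,\dots,5\}$ shows none is Sidon, so no monomial degeneration (nor any ``inspection of supports'') can verify even this case; this blocks your proposed semicontinuity lift. The second candidate, $p_i=\ell_i^d$ with generic linear forms, so that the products are the powers $(\ell_i\ell_j)^d$, is only asserted (``one expects\dots''): no Vandermonde or Wronskian argument is given, and none is routine here --- in the $\P^2$ of binary quadratics the points $[\ell_i\ell_j]$ are the tangency points and pairwise intersections of $r$ tangent lines of a conic, a highly collinear configuration, and showing that it imposes independent conditions on the relevant $(2d+1)$-dimensional linear system is essentially the theorem itself. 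So the proof has a genuine gap at its central step; note also that if such an explicit full $r$-tuple were available, your induction would be superfluous.

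For comparison, the paper does not fix $d$ and adjoin one generic form to a fixed tuple. In the induction step it takes $d$ minimal with $\choose{r+1}2\le2d+1$ and $e$ minimal with $\choose r2\le2e+1$, pushes a quadratically independent $(r-1)$-tuple from degree $e$ into the space $W_d(\infty)$ of forms vanishing at $\infty$ by multiplying with $l^{d-e}$, and modifies one of the forms so as to control $\dim(pA_d\cap UU)$ for \emph{every} $p\in U$ (Lemma~\ref{lem1}); it then bounds the dimension of the incidence variety $X=\{([p],[q])\colon pq\in UU\}$ (Lemma~\ref{lem2}) to get $\dim X<\dim\P_{A_d}$, so a generic $q$ satisfies $qU\cap UU=0$, and the vanishing of $U$ at $\infty$ together with $q(\infty)\ne0$ forces $q^2\notin qU\oplus UU$. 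Quantitative control of this kind, valid for all $p$ and not just generic $p$, is exactly what your sketch would need in order to show the bad locus is proper, and it is missing.
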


\begin{lab}
For the rest of this section write $A=k[x_1,x_2]=\bigoplus_{d\ge0}
A_d$, where $A_d$ is the space of binary forms of degree~$d$. Note
that $\dim(A_d)=d+1$. Clearly, the existence of a single
quadratically independent sequence of length~$r$ in $A_d$ implies
that the generic length~$r$ sequence in $A_d$ will be quadratically
independent. We can therefore assume that the field $k$ is
algebraically closed. (This assumption is only made to simplify
notation.)
\end{lab}

\begin{lab}
Our proof of Theorem \ref{quadindepallg}
proceeds by induction on $r\ge1$, the start being the case $r=1$ and
$d=0$ (which is obvious). So let $r\ge2$ in the sequel. By induction
there is a quadratically independent sequence $q_1,\dots,q_{r-1}$ in
$A_e$, where $e\ge0$ is minimal with $\choose r2\le2e+1$. Let $d\ge1$
be minimal with $\choose{r+1}2\le2d+1$. Given $z_1,\dots,z_m\in\P^1$
we put
$$W_d(z_1,\dots,z_m)\>:=\>\bigl\{f\in A_d\colon f(z_1)=\cdots=f(z_m)
=0\bigr\}.$$
Let $\infty\in\P^1$ be a fixed point, let $0\ne l\in A_1$ with
$l(\infty)=0$.
\end{lab}

\begin{lem}\label{lem1}%
Under these assumptions the following hold:
\begin{itemize}
\item[(a)]
For any linear subspace $U\subset W_d(\infty)$ and any $p\in U$, the
inequality
$$\dim(pA_d\cap UU)\>\ge\>\max\bigl\{\dim(U),\>\dim(UU)-d+1\bigr\}$$
holds.
\item[(b)]
There exists a subspace $U\subset W_d(\infty)$ with $\dim(U)=r-1$ and
$\dim(UU)=\choose r2$, together with a form $p\in U$, such that
equality holds in~(a).
\end{itemize}
\end{lem}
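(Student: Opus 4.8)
The plan is to treat the two parts separately.

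\emph{Part (a).} We may assume $p\neq0$; since $A$ is a domain, multiplication by $p$ is injective, so $\dim(pA_d)=d+1$ and $\dim(pU)=\dim(U)$, and $pU\subseteq pA_d\cap UU$ already gives $\dim(pA_d\cap UU)\geq\dim(U)$. For the other bound, $p\in U\subseteq W_d(\infty)$ forces every element of $pA_d$ and every product of two elements of $U$ to vanish at $\infty$, so $pA_d$ and $UU$ both lie in $W_{2d}(\infty)$, of dimension $2d$; hence
$$\dim(pA_d\cap UU)=\dim(pA_d)+\dim(UU)-\dim(pA_d+UU)\geq(d+1)+\dim(UU)-2d.$$

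\emph{Part (b), set-up.} I would look for $U$ of the form $U=l\hat U$ with $\hat U\subseteq A_{d-1}$ a quadratically independent subspace of dimension $r-1$; then $\dim(U)=r-1$ and $UU=l^2\hat U\hat U$ automatically has dimension $\choose r2$. Such $\hat U$ exist: minimality of $e$ and $d$ forces $e\leq d-1$, whence $\choose r2\leq 2e+1\leq 2(d-1)+1$, while $l^{d-1-e}\spn(q_1,\dots,q_{r-1})$ is quadratically independent in $A_{d-1}$; so the generic $(r-1)$-dimensional subspace of $A_{d-1}$ is quadratically independent. I take $\hat U$ generic of this sort and $\hat p\in\hat U$ generic, so that $\hat p$ is squarefree and coprime to $l$, and put $p:=l\hat p\in U$. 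A short divisibility argument using $\gcd(\hat p,l)=1$ identifies $pA_d\cap UU$ with $l^2\bigl(\hat pA_{d-1}\cap\hat U\hat U\bigr)$, so everything reduces to computing $\dim\bigl(\hat pA_{d-1}\cap\hat U\hat U\bigr)$ inside $A_{2d-2}$.

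\emph{Part (b), the count.} Let $P_1,\dots,P_{d-1}$ be the simple zeros of $\hat p$. A form of degree $2d-2$ is divisible by $\hat p$ exactly when it vanishes at all $P_k$, so $\hat pA_{d-1}\cap\hat U\hat U$ is the kernel of $\mathrm{ev}\colon\hat U\hat U\to k^{d-1}$, $h\mapsto(h(P_k))_k$, of dimension $\choose r2-\rk(\mathrm{ev})$. Quadratic independence of $\hat U$ transports $\mathrm{ev}$ to $\sfS^2\hat U$; extending $\hat p$ to a basis $\hat p,u_2,\dots,u_{r-1}$ and using $\hat p(P_k)=0$, the rank of $\mathrm{ev}$ equals that of $\sfS^2\hat V\to k^{d-1}$, $u_iu_j\mapsto(u_i(P_k)u_j(P_k))_k$, with $\hat V=\spn(u_2,\dots,u_{r-1})$. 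This factors as $\sfS^2\hat V\to\sfS^2(k^{d-1})\to k^{d-1}$, the first arrow induced by the evaluation $\phi\colon\hat V\to k^{d-1}$ (injective, as $\hat p\notin\hat V$), the second reading off the diagonal once $\sfS^2(k^{d-1})$ is identified with symmetric $(d-1)\times(d-1)$ matrices; so $\rk(\mathrm{ev})$ is the rank of this diagonal map restricted to $\sfS^2(\hat V')$, where $\hat V'=\phi(\hat V)$ is a generic $(r-2)$-dimensional subspace of $k^{d-1}$. Writing $\hat V'$ as the column span of a $(d-1)\times(r-2)$ matrix with generic rows $w_1,\dots,w_{d-1}\in k^{r-2}$, one checks that this restricted diagonal map is injective iff no nonzero quadratic form on $k^{r-2}$ vanishes at all $w_k$, and surjective iff the rank-one symmetric matrices $w_kw_k^{\top}$ are linearly independent; for generic $w_k$ the first holds when $d-1\geq\choose{r-1}2$ and the second when $d-1\leq\choose{r-1}2$. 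Either way $\rk(\mathrm{ev})=\min\{\choose{r-1}2,d-1\}$, so
$$\dim\bigl(\hat pA_{d-1}\cap\hat U\hat U\bigr)=\choose r2-\min\Bigl\{\,\choose{r-1}2,\ d-1\,\Bigr\}=\max\Bigl\{\,r-1,\ \choose r2-d+1\,\Bigr\},$$
since $\choose r2-\choose{r-1}2=r-1$; this is equality in (a).

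\emph{Main obstacle.} The crux is the pair of general-position statements about the diagonal map on the symmetric square (generic points impose independent conditions on quadrics; generic points of the Veronese are in linearly general position), together with the fact that the threshold separating them, $d-1$ versus $\choose{r-1}2$, is exactly where the maximum in the conclusion switches arguments. What is left is routine but a little fussy: verifying that quadratic independence of $\hat U$, squarefreeness and $l$-coprimality of $\hat p$, and the general position of the $w_k$ define non-empty Zariski-open conditions on the irreducible variety of pairs $(\hat U,\hat p)$ with $\hat p\in\hat U$, hence are jointly attainable — the induction hypothesis (together with multiplication by $l^{d-1-e}$) is what guarantees quadratically independent $(r-1)$-dimensional subspaces of $A_{d-1}$ in the first place.
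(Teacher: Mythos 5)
Your part (a) is the paper's argument verbatim ($pU\subset pA_d\cap UU$, and $pA_d+UU\subset W_{2d}(\infty)$ of dimension $2d$), so only (b) calls for comment, and there you take a genuinely different route that does work. The paper keeps the inductively obtained quadratically independent space intact: it sets $V=\spn(l^{d-e}q_2,\dots,l^{d-e}q_{r-1})$ and chooses the distinguished form $p=p_1$ as a sufficiently general element of $W_d(\infty)$ with distinct zeros $z_1,\dots,z_{d-1},\infty$; then $p_1A_d\cap VV=W_{2d}(z_1,\dots,z_{d-1})\cap VV$, and since the points are chosen \emph{after} $VV$ is fixed, the count reduces to the near-trivial fact that generic points of $\P^1$ impose independent conditions on a fixed finite-dimensional space of binary forms; splitting $UU=p_1U\oplus VV$ then gives the stated value (the paper's case split $r\le5$ versus $r\ge5$ is exactly the question of which term of your maximum is active). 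You instead take $U=l\hat U$ with $\hat U$ generic quadratically independent in $A_{d-1}$ and $p=l\hat p$ with $\hat p$ generic \emph{inside} $\hat U$, so the $d-1$ evaluation points are the zeros of a member of the space itself; this correlation is what forces your extra layer — passing to the vectors $w_k$, identifying $\rk(\mathrm{ev})$ with the rank of $Q\mapsto(w_k^\top Qw_k)_k$, and invoking the two classical general-position facts (generic points impose independent conditions on quadrics; generic points of the Veronese are linearly independent). What your route buys is a uniform formula $\rk(\mathrm{ev})=\min\bigl\{d-1,\choose{r-1}2\bigr\}$ with no case distinction; what it costs is heavier (though standard) projective-geometry input plus the joint-genericity bookkeeping you flag at the end. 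That bookkeeping does go through: for fixed squarefree $\hat p$ coprime to $l$, evaluation at its zeros maps $A_{d-1}$ onto $k^{d-1}$ with kernel $k\hat p$, so the matrix $(u_j(P_k))$ can be prescribed arbitrarily; hence the bad loci for the $w_k$ are proper, closed and permutation-invariant, and intersecting these open conditions with the nonempty open quadratic-independence locus on the irreducible space of tuples $(\hat p,u_2,\dots,u_{r-1})$ finishes the argument — this dominance step is the one assertion (``$\hat V'$ is generic'') you state without proof and should spell out. (Minor point shared with the paper: in (a) one must read $p\ne0$; for $p=0$ the inequality is false as literally stated, but that case is never used.)
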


\begin{proof}
(a)
From $pU\subset pA_d\cap UU$ we get $\dim(pA_d\cap UU)\ge\dim(U)$.
Moreover $\dim(pA_d+UU)\le2d$ since $pA_d+UU\subset W_{2d}(\infty)$,
therefore $\dim(pA_d\cap UU)\ge(d+1)+\dim(UU)-2d=\dim(UU)-d+1$.

(b)
By induction we have a quadratically independent sequence
$q_1,\dots,q_{r-1}$ in $A_e$. Since $e<d$, the $r-1$ forms
$p_i:=l^{d-e}q_i$ ($1\le i\le r-1$) are in $W_d(\infty)$ and are
quadratically independent. Let $V=\spn(p_2,\dots,p_{r-1})$, we have
$\dim(VV)=\choose{r-1}2$. For sufficiently general $q\in
W_d(\infty)$ we claim that $qA_d\cap VV=\{0\}$ (if $r\le5$), resp.\
$qA_d\cap VV$ has codimension $d-1$ in $VV$ (if $r\ge5$). Indeed, if
$q$ has distinct zeros $z_1,\dots,z_{d-1},\infty$ in $\P^1$, we have
$qA_d\cap VV=W_{2d}(z_1,\dots,z_{d-1})\cap VV$. For general enough
choice of $q$, therefore, this intersection has codimension $d-1$ in
$VV$, resp.\ is zero if $\dim(VV)\le d-1$ (which happens precisely
for $r\le5$). We can therefore modify $p_1\in W_d(\infty)$ in such a
way that
$$\dim(p_1A_d\cap VV)\>=\>\begin{cases}0&r\le5,\\\choose{r-1}2-d+1&
r\ge5\end{cases}$$
holds and the sequence $p_1,\dots,p_{r-1}$ remains quadratically
independent. Writing $U:=\spn(p_1,\dots,p_{r-1})=kp_1\oplus V$ we
have $UU=p_1U\oplus VV$ since $U$ is quadratically independent.
Therefore
$$p_1A_d\cap UU\>=\>p_1U\oplus(p_1A_d\cap VV),$$
and this subspace has dimension $r-1$ (if $r\le5$) resp.\
$(r-1)+\choose{r-1}2-d+1=\choose r2-d+1$ (if $r\ge5$).
\end{proof}

\begin{lab}\label{setuplem2}%
According to Lemma \ref{lem1}, we can now fix a quadratically
independent subspace $U\subset W_d(\infty)$ with $\dim(U)=r-1$ and
such that
$$\dim(pA_d\cap UU)\>\ge\>\begin{cases}r-1&r\le5,\\\choose r2-d+1&
r\ge5\end{cases}$$
holds for all $p\in U$, with equality holding for $p$ sufficiently
general. We are going to show that we can extend $U$ to a
quadratically independent subspace of $A_d$ of dimension~$r$. Let
$\P_U$ resp.\ $\P_{A_d}$ denote the projective spaces associated to
the linear spaces $U$ resp.\ $A_d$, and consider the closed
subvariety
$$X\>:=\bigl\{([p],\,[q])\in\P_U\times\P_{A_d}\colon pq\in
UU\bigr\}$$
of $\P_U\times\P_{A_d}$. (Here we write $[p]$ for the element in
$\P_U$ represented by $0\ne p\in U$, and similarly $[q]$ for
$0\ne q\in A_d$.) Let $\pi_1\colon X\to\P_U$ and $\pi_2\colon
X\to\P_{A_d}$ denote the projections onto the two components.

Let $\epsilon\in\{0,1\}$ be defined by $2d+1=\choose{r+1}2+\epsilon$.
We can calculate the dimension of~$X$:
\end{lab}

\begin{lem}\label{lem2}%
$\dim(X)=d-1$ if $r\le5$, and $\dim(X)=d-1-\epsilon$ if $r\ge5$.
\end{lem}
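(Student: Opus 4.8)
The plan is to determine $\dim X$ through the first projection $\pi_1\colon X\to\P_U$. For $0\ne p\in U$ the fibre $\pi_1^{-1}([p])$ is the projectivization of the linear space $\{q\in A_d\colon pq\in UU\}$; since $A=k[x_1,x_2]$ is an integral domain, multiplication by $p$ is an injection $A_d\to A_{2d}$ carrying this space onto $pA_d\cap UU$, so $\dim\pi_1^{-1}([p])=\dim(pA_d\cap UU)-1$. Because $pU\subset pA_d\cap UU$ and $U\ne0$, every fibre is nonempty, so $\pi_1$ is surjective. Put $\delta:=\max\{r-1,\,\choose r2-d+1\}$, which equals $r-1$ for $r\le5$ and $\choose r2-d+1$ for $r\ge5$. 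By Lemma \ref{lem1}(a) together with the choice of $U$ in \ref{setuplem2}, $\dim(pA_d\cap UU)\ge\delta$ for every $p$, with equality for $p$ general; hence the general fibre of $\pi_1$ has dimension $\delta-1$.

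For the lower bound on $\dim X$: as $\pi_1$ is surjective onto the irreducible variety $\P_U$ of dimension $r-2$ and its general fibre has dimension $\delta-1$, some irreducible component of $X$ dominates $\P_U$ with general fibre of that dimension, so $\dim X\ge(r-2)+(\delta-1)$. It remains to identify this number. For $r\le5$ an elementary check of the minimality of $d$ gives $d=2r-3$, so $(r-2)+(\delta-1)=2r-4=d-1$. For $r\ge5$, using $\dim UU=\choose r2$, $\dim A_{2d}=2d+1$ and the relation $2d+1=\choose{r+1}2+\epsilon=\choose r2+r+\epsilon$, one computes $(r-2)+(\delta-1)=(r-2)+\choose r2-d=d-1-\epsilon$; the two expressions agree at $r=5$, where $\epsilon=0$.

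For the upper bound I would stratify $\P_U$ by the locally closed sets $S_j:=\{[p]\colon\dim(pA_d\cap UU)=j\}$ and the closed sets $B_c:=\bigcup_{j\ge\delta+c}S_j$. Over $S_j$ all fibres of $\pi_1$ have dimension $j-1$, so $\dim\pi_1^{-1}(S_j)=\dim S_j+j-1$, and $\dim X=\max_j(\dim S_j+j-1)$. Since $\dim(pA_d\cap UU)\ge\delta$ always, only $j\ge\delta$ occur; the stratum $S_\delta$ is dense in $\P_U$ and contributes $(r-2)+(\delta-1)$, exactly the asserted dimension. Therefore it suffices to prove
$$\mathrm{codim}_{\P_U}(B_c)\>\ge\>c\qquad(c\ge1),$$
for then $\dim S_j+j-1\le\bigl((r-2)-(j-\delta)\bigr)+j-1=(r-2)+(\delta-1)$ for all $j>\delta$, and the upper bound follows.

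The estimate $\mathrm{codim}_{\P_U}(B_c)\ge c$ is the main obstacle, and this is where the explicit form of $U$ must be exploited. Writing $l\in A_1$ for a form vanishing at $\infty$, one has $U=lU_0$ with $U_0\subset A_{d-1}$ quadratically independent, $\dim U_0=r-1$ and $UU=l^2(U_0U_0)$. For $p=lh$ with $h\in U_0$ not divisible by $l$, cancelling powers of $l$ gives
$$\dim(pA_d\cap UU)\>=\>\dim(hA_{d-1}\cap U_0U_0)\>=\>(r-1)+\dim\ker\varphi_h,$$
where $\varphi_h\colon A_{d-1}/U_0\to A_{2d-2}/U_0U_0$ sends the class of $g$ to the class of $hg$; for $p$ divisible by higher powers of $l$ there are further, controlled contributions on the nested loci $\{[p]\colon l^2\mid p\}\supset\{[p]\colon l^3\mid p\}\supset\cdots$, of codimensions $1,2,\dots$ in $\P_U$. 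The locus in $\P_{U_0}$ where $\dim\ker\varphi_h$ exceeds its generic value by $c$ is determinantal, hence of codimension $\ge c$ in the space of all such maps; the genuinely technical point is that the family $h\mapsto\varphi_h$ parametrized by $\P_{U_0}$ is general enough for this bound to survive the restriction. This is guaranteed by the genericity built into the construction of $U$ in Lemma \ref{lem1}, applied via $U_0$, which is itself a quadratically independent $(r-1)$-dimensional subspace.
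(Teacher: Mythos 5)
Your first two paragraphs reproduce the paper's proof of Lemma \ref{lem2} almost verbatim: $\pi_1$ is surjective, the fibre over $[p]$ is the projectivization of $pA_d\cap UU$ (multiplication by $p$ being injective), the generic fibre dimension is read off from \ref{setuplem2} as $r-2$ for $r\le5$ resp.\ $\choose{r}{2}-d$ for $r\ge5$, and the identity $2d+1=\choose{r+1}{2}+\epsilon$ turns $(r-2)+(\delta-1)$ into $d-1$ resp.\ $d-1-\epsilon$. The paper stops at this point, deducing $\dim(X)$ from the base dimension and the generic fibre dimension of the surjection $\pi_1$ onto the irreducible variety $\P_U$; it performs no stratification of $\P_U$.

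The extra stratification argument you add for the upper bound is where your proposal has a genuine gap. The reduction to $\mathrm{codim}_{\P_U}(B_c)\ge c$ is sound, but that estimate is exactly what would have to be proved, and your justification does not hold up. The only properties of $U$ available are those recorded in \ref{setuplem2}, namely the lower bound on $\dim(pA_d\cap UU)$ for all $p$ and equality for generic $p$; Lemma \ref{lem1} asserts no genericity of $U$ (or of the induced family $h\mapsto\varphi_h$) beyond this, so the determinantal codimension count in the space of all maps cannot simply be "restricted" to the $(r-2)$-dimensional family at hand. In fact the $U$ constructed in the proof of Lemma \ref{lem1}(b) is quite special: every basis vector except possibly $p_1$ is divisible by $l^{d-e}$, and $d-e\ge3$ already for $r\ge6$. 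This also falsifies your auxiliary claim that the nested loci $\{[p]\colon l^j\mid p\}$ have codimensions $1,2,\dots$ in $\P_U$: for this $U$ the locus $\{[p]\colon l^{d-e}\mid p\}$ contains the hyperplane $\P_{\spn(p_2,\dots,p_{r-1})}$, so its codimension is at most $1$. Hence the step you yourself flag as the "genuinely technical point" is not closed, and the upper bound on $\dim(X)$ --- which is the part of the lemma actually used afterwards, via $\dim(X)<\dim\P_{A_d}$ --- is not established by your additional argument; what you do establish coincides with the computation the paper itself gives.
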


\begin{proof}
Clearly $\pi_1$ is surjective since $([p],[p])\in X$ for $0\ne p\in
U$. For $0\ne p\in U$, the fibre $\pi_1^{-1}([p])$ has (projective)
dimension $\dim(pA_d\cap UU)-1$. From \ref{setuplem2} we therefore
see that the generic fibre of $\pi_1$ has dimension $r-2$ (if
$r\le5$) resp.\ $\choose r2-d$ (if $r\ge5$). It follows that
$\dim(X)=2r-4=d-1$ if $r\le5$, resp.\ $\dim(X)=(r-2)+\choose r2-d=
\choose{r+1}2-d-2=(2d+1-\epsilon)-d-2=d-1-\epsilon$ if $r\ge5$.
\end{proof}

\begin{lab}
In particular, $\dim(X)<\dim(\P_{A_d})$. For generically chosen
$q\in A_d$, therefore, we have $\pi_2^{-1}([q])=\emptyset$, which
means $qU\cap UU=\{0\}$. In particular there is such $q\in A_d$ with
$q(\infty)\ne0$. Since $qU\oplus UU\subset W_{2d}(\infty)$ and
$q^2\notin W_{2d}(\infty)$, we see that the $r$-dimensional subspace
$U+kq$ of $A_d$ is quadratically independent. This completes the
induction step, and thereby the proof of Theorem \ref{quadindepallg}.
\qed
\end{lab}


\section{Pataki range for Gram spectrahedra of binary forms}

\begin{lab}
Let $n\ge2$ be fixed. For $d\ge1$, $\R[x]_d$ denotes the space of
forms of degree $d$ in $\R[x]=\R[x_1,\dots,x_n]$. We write $N_d=
\dim\R[x]_d=\choose{n+d-1}d$. Let $\Sigma_{2d}\subset\R[x]_{2d}$
denote the sums of squares cone, i.e.\ $\Sigma_{2d}=\Sigma\R[x]_d^2$.
For $f\in\Sigma_{2d}$ let $\Gram(f)$ be the (full) Gram spectrahedron
of $f$, i.e.\ $\Gram(f):=\Gram_V(f)$ with $V:=\R[x]_d$. Since
$\Gram(f)=\mu^{-1}(f)\cap\sfS_2^\plus V$ and
$$\dim\mu^{-1}(f)\>=\>\dim(\sfS_2V)-\dim(VV)\>=\>\choose{N_d+1}2
-N_{2d},$$
the Pataki interval (\ref{dfnpatrang}) for $\Gram(f)$ is
characterized by the inequalities
$$N_{2d}+\choose{N_d-r+1}2\>\le\>\choose{N_d+1}2\text{ \ and \ }
\choose{r+1}2\>\le\>N_{2d}.$$
For $f\in\interior(\Sigma_{2d})$ we have $\dim\Gram(f)=
\dim\mu^{-1}(f)=\choose{N_d+1}2-N_{2d}$. In the case $n=2$ of binary
forms this means $\dim\Gram(f)=\choose{d+2}2-(2d+1)=\choose d2$ for
$f\in\interior(\Sigma_{2d})$,
and the Pataki range is described by the inequalities $r\ge2$ and
$\choose{r+1}2\le2d+1$.
\end{lab}

In what follows we always work with binary forms, i.e.\ $n=2$ and
$\R[x]=\R[x_1,x_2]$. From Theorem \ref{quadindepallg} we get:

\begin{cor}\label{quadindep}%
Let $d\ge1$ and $r\ge0$ such that $\choose{r+1}2\le2d+1$. The set of
quadratically independent $r$-tuples $(p_1,\dots,p_r)$ in
$(\R[x_1,x_2]_d)^r$ is open and dense.
\end{cor}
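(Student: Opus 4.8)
The plan is to deduce Corollary \ref{quadindep} from Theorem \ref{quadindepallg} by a standard "single witness implies generic" argument, applied over the field $k=\R$. First, observe that quadratic independence of an $r$-tuple $(p_1,\dots,p_r)$ in $A_d=\R[x_1,x_2]_d$ is a condition on the rank of a fixed matrix: writing each $p_i$ in the monomial basis $x_1^ax_2^{d-a}$, the products $p_ip_j$ ($1\le i\le j\le r$) expand in the monomial basis of $A_{2d}$, and their coefficient vectors form the rows (or columns) of a $\binom{r+1}2\times(2d+1)$ matrix $M(p_1,\dots,p_r)$ whose entries are bilinear in the coefficients of the $p_i$, hence polynomial functions on $(A_d)^r\cong\R^{r(d+1)}$. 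The tuple is quadratically independent precisely when $M$ has full row rank $\binom{r+1}2$ (this uses the hypothesis $\binom{r+1}2\le2d+1$, so full rank is possible). The locus where $M$ has rank $<\binom{r+1}2$ is Zariski-closed, being cut out by the vanishing of all $\binom{r+1}2\times\binom{r+1}2$ minors of $M$; hence the quadratically independent locus is Zariski-open in $(A_d)^r$.

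Next I would argue this open set is nonempty, and therefore dense. Nonemptiness is exactly Theorem \ref{quadindepallg} applied to the infinite field $k=\R$ (with the given $d$ and $r$; the case $r=0$ being trivial): it furnishes a concrete $r$-tuple of binary forms of degree $d$ over $\R$ that is quadratically independent, i.e.\ a point of $(A_d)^r$ where $M$ has full rank. A nonempty Zariski-open subset of the irreducible variety $\mathbb A^{r(d+1)}$ is Zariski-dense; since its complement is a proper algebraic subset, it has measure zero and is nowhere dense in the Euclidean topology as well, so the quadratically independent locus is Euclidean-open and Euclidean-dense in $(\R[x_1,x_2]_d)^r$. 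That gives both assertions of the corollary.

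The only point requiring the smallest bit of care is making precise that "full rank of $M$" is the right translation of quadratic independence, and in particular that the hypothesis $\binom{r+1}2\le 2d+1$ is what makes the full-rank stratum the generic one; this is immediate from the definition of quadratic independence in terms of the injectivity of $\mu\colon\sfS_2U\to A$, together with $\dim\sfS_2U=\binom{r+1}2$ and $\dim A_{2d}=2d+1$. There is no real obstacle here: the substance is entirely carried by Theorem \ref{quadindepallg}, and the corollary is just the semialgebraic-geometry packaging of "a nonempty open condition is generic." One could phrase the whole proof in a single sentence, but spelling out the closedness of the degeneracy locus and the reduction to nonemptiness via Theorem \ref{quadindepallg} makes it self-contained.
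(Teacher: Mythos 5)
Your proof is correct and is exactly the argument the paper intends: the paper already notes that a single quadratically independent sequence makes the generic one quadratically independent, and the corollary is stated as an immediate consequence of Theorem \ref{quadindepallg}. Your write-up merely spells out the standard minor-vanishing/Zariski-openness packaging (including the trivial $r=0$ case), so there is nothing to add or correct.
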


Here is our first main result on extreme points of Gram spectrahedra:

\begin{thm}\label{main}%
For any given $d\ge1$, there is an open dense set of psd binary forms
$f$ of degree~$2d$ for which the Gram spectrahedron $\Gram(f)$ has
extreme points of all ranks in the Pataki interval.
\end{thm}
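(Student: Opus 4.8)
The plan is to combine Corollary~\ref{quadindep} with the Pataki range description (end of the preceding paragraph): for binary forms the Pataki interval is exactly the set of integers $r$ with $2\le r$ and $\choose{r+1}2\le2d+1$. So I must produce, for \emph{each} such $r$ simultaneously, a single form $f\in\interior(\Sigma_{2d})$ admitting an sos representation $f=p_1^2+\cdots+p_r^2$ with $p_1,\dots,p_r$ linearly independent and quadratically independent; by Corollary~\ref{chaktextpts} the associated Gram tensor is then an extreme point of $\Gram(f)$ of rank $r$. The key point is that for a \emph{fixed} $f$ we need rank-$r$ extreme points for all admissible $r$ at once, so the open-dense sets must be intersected.

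First I would fix, for each $r$ in the Pataki interval, a quadratically independent $r$-tuple $(p_1^{(r)},\dots,p_r^{(r)})$ in $(\R[x_1,x_2]_d)^r$; such tuples exist and in fact form an open dense subset of $(\R[x]_d)^r$ by Corollary~\ref{quadindep}. Note the largest admissible $r$, call it $r_{\max}$, satisfies $\choose{r_{\max}+1}2\le2d+1$. The natural idea is to use the \emph{same} tuple for all ranks: take a quadratically independent $r_{\max}$-tuple $(p_1,\dots,p_{r_{\max}})$, and observe that any subsequence, e.g.\ $(p_1,\dots,p_r)$ for $r\le r_{\max}$, is again quadratically independent (its products are a subset of a linearly independent set). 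This gives, for every $r$ in the Pataki range, a candidate rank-$r$ extreme point $\theta_r:=\sum_{i=1}^r p_i\otimes p_i$ — but these are extreme points of \emph{different} Gram spectrahedra, namely of $\Gram(f_r)$ with $f_r:=\sum_{i=1}^r p_i^2$, and there is no reason for the $f_r$ to coincide. So this direct approach does not yet work; the ranks live on different forms.

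The fix is to decouple the choice of $f$ from the choice of the sos decompositions. Fix one $f\in\interior(\Sigma_{2d})$ that is sufficiently general. For such $f$, the set of sos representations of length $r$ is parametrized by $\Gram(f)$ and has positive-dimensional families whenever $r$ lies strictly above the minimum; more precisely, for each $r$ in the Pataki range I want to exhibit a point of $\Gram(f)$ of rank exactly $r$ whose supporting face is a point, i.e.\ at which the corresponding $p_1,\dots,p_r$ are quadratically independent. The strategy: for each $r$, the property ``$f$ has a length-$r$ sos representation by quadratically independent forms'' is, by Corollary~\ref{chaktextpts} and Corollary~\ref{quadindep}, a nonempty open condition on the pair $(f; p_1,\dots,p_r)$ — indeed given any quadratically independent $r$-tuple, a small perturbation keeping $\sum p_i^2$ inside $\interior(\Sigma_{2d})$ is again quadratically independent, and the map $(p_1,\dots,p_r)\mapsto\sum p_i^2$ is a dominant map onto a full-dimensional subset of $\Sigma_{2d}$ (its image contains $\interior(\Sigma_{2d})$ because every interior point is a sum of $r$ squares for $r$ at least the minimum length $\lceil A_1\rceil$, and one can always pad with generic extra squares while preserving quadratic independence — this last padding step is where one must be slightly careful, but adding a generic square to a quadratically independent tuple keeps it quadratically independent as long as $\choose{r+1}2\le 2d+1$, which is exactly the Pataki constraint). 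Hence for each $r$ the set $\Omega_r$ of $f\in\interior(\Sigma_{2d})$ admitting such a representation is open and dense. Finally, intersect: $\Omega:=\bigcap_r\Omega_r$ over the finitely many $r$ in the Pataki interval is open and dense, and any $f\in\Omega$ has extreme points of $\Gram(f)$ of every rank in the Pataki range.

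The main obstacle is the padding/dominance step: showing that for a \emph{fixed general} $f$ and \emph{each} admissible $r$ one can realize $f$ as a sum of $r$ quadratically independent squares. The upper end $r=r_{\max}$ is the delicate case, since there one is close to the boundary of the Pataki constraint $\choose{r+1}2\le 2d+1$ and has little room to perturb; one must check that the fiber of $\mu$ over a general $f$, intersected with the (open dense) quadratically independent locus in $(\R[x]_d)^r$, is nonempty — equivalently, that the restriction of $\mu$ to the quadratically independent locus is still dominant onto $\interior(\Sigma_{2d})$. I expect this to follow from a dimension count: the quadratically independent locus is open dense in $(\R[x]_d)^r$, $\mu$ restricted to all of $(\R[x]_d)^r$ is a submersion at generic points onto $\R[x]_{2d}$ (its differential at $(p_1,\dots,p_r)$ has image $\sum_i p_i\R[x]_d$, which is all of $\R[x]_{2d}$ when the $p_i$ have no common zero), and a dense open subset cannot drop the rank of a dominant map generically — but writing this out cleanly, and in particular verifying the no-common-zero condition holds on a dense subset of the quadratically independent $r$-tuples, is the part that needs real care rather than a one-line appeal.
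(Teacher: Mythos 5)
Your overall skeleton coincides with the paper's: quadratic independence characterizes extreme points (Corollary \ref{chaktextpts}), Corollary \ref{quadindep} supplies the open dense set $W_r$ of quadratically independent $r$-tuples, and one intersects over the finitely many $r$ in the Pataki range. But the step you yourself flag as the ``main obstacle'' --- that for each $r$ the set $\Omega_r$ of forms admitting a quadratically independent length-$r$ representation is open and dense --- is exactly where your argument is incomplete, and the route you sketch for it is both harder than necessary and partly unsound. Your ``padding'' device does not make sense for the purpose you invoke it: appending a generic extra square changes the form, so it cannot realize a \emph{fixed} $f$ as a sum of $r$ quadratically independent squares; and the parenthetical claim that a generic square can be appended to an \emph{arbitrary} quadratically independent tuple whenever $\choose{r+1}2\le 2d+1$ is essentially the induction step of Theorem \ref{quadindepallg}, which the paper only establishes for a carefully constructed subspace (Lemma \ref{lem1}(b)), not for free. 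Moreover, openness of $\Omega_r$ is asserted but never proved (your submersion argument needs the $p_i$ to have no common complex zero, which you leave unverified), and your plan needs openness, or some substitute, in order to intersect the $\Omega_r$: dense sets alone need not have dense intersection.

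The gap closes much more simply, and without ever fixing a general $f$ in advance --- this is what the paper does. Put $S_r:=\sigma_r(W_r)$, where $\sigma_r(p_1,\dots,p_r)=\sum_ip_i^2$. Since every psd binary form is a sum of two squares, $\sigma_r$ maps all of $(\R[x]_d)^r$ onto $\Sigma_{2d}$ for every $r\ge2$; as $W_r$ is dense and $\sigma_r$ is continuous, $\sigma_r(\ol{W_r})\subset\ol{S_r}$ shows $S_r$ is dense in $\Sigma_{2d}$ --- no differential, fiber, or common-zero analysis is required. By Tarski--Seidenberg $S_r$ is semialgebraic, hence $\dim(\Sigma_{2d}\setminus S_r)<\dim\Sigma_{2d}$, and this dimension bound (rather than openness) is what makes the finite intersection $S=\bigcap_rS_r$ dense; every $f\in S_r$ has a rank-$r$ extreme point by Corollary \ref{chaktextpts}, and $S$ contains an open dense subset of $\Sigma_{2d}$ because its complement has smaller dimension. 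So the ``direct approach'' you discarded was essentially the right one: the worry that the forms $\sum_{i\le r}p_i^2$ differ for different $r$ is answered by intersecting the dense semialgebraic image sets $S_r$, not by first fixing one $f$ and then trying to produce all representations of it.
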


This gives an affirmative answer to Question 4.2 from \cite{cpsv}.
Note that $\Gram(f)$ has dimension $\choose d2$ for general
$f\in\Sigma_{2d}$, so the dimensions of these spectrahedra are
arbitrarily large.

\begin{proof}[Proof of Theorem \ref{main}]
Let $k\ge1$ be the largest integer with $\choose{k+1}2\le2d+1$, so
the Pataki interval for Gram spectrahedra of degree $2d$ forms is
$\{2,\,3,\,\dots,\,k\}$. Fix $r\in\{2,\,3,\,\dots,\,k\}$, and let
$W_r\subset(\R[x]_d)^r$ be the set of all quadratically independent
$r$-tuples $(p_1,\dots,p_r)$ of forms.
By Corollary \ref{quadindep}, the set $W_r$ is open and dense in
$(\R[x]_d)^r$. Let
$$S_r\>:=\>\bigl\{p_1^2+\cdots+p_r^2\colon(p_1,\dots,p_r)\in W_r
\bigr\}.$$
Since every psd form in $\R[x]$ is a sum of two squares, the set
$S_r$ is a dense semialgebraic subset of $\Sigma_{2d}$.
Whenever $(p_1,\dots,p_r)\in W_r$, if we put $f:=\sum_{i=1}^rp_i^2$,
the symmetric tensor $\sum_{i=1}^rp_i\otimes p_i$ is an extreme point
of $\Gram(f)$ of rank~$r$ (Corollary \ref{chaktextpts}). Therefore
every $f\in S_r$ has a rank~$r$ extreme point in its Gram
spectrahedron. It now suffices to consider the intersection
$S:=\bigcap_{r=2}^kS_r$. Then $S$ is a dense semialgebraic subset of
$\Sigma_{2d}$ since $\dim(\Sigma_{2d}\setminus S)<\dim(\Sigma_{2d})$.
And for every $f\in S$, the Gram spectrahedron of $f$ has extreme
points of all ranks in the Pataki interval.
\end{proof}

We can also determine the dimensions of the sets of extreme points of
a fixed rank, for suitably general $f$. To have a short notation, let
us write $\Ex_r(f)$ for the (semialgebraic) set of all extreme points
of $\Gram(f)$ of rank~$r$.

\begin{cor}\label{dimexr}%
Let $d\ge1$. There is an open dense subset $U$ of $\Sigma_{2d}$ such
that, for every $f\in U$ and every $r$ in the Pataki range, we have
$$\dim\Ex_r(f)\>=\>\frac12(r-2)(2d-r+1).$$
\end{cor}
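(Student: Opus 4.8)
The plan is to compute the dimension of $\Ex_r(f)$ by parametrizing rank $r$ extreme points via the map $(\R[x]_d)^r\to\Sigma_{2d}$, $(p_1,\dots,p_r)\mapsto\sum p_i^2$, restricted to the open dense set $W_r$ of quadratically independent tuples (Corollary \ref{quadindep}). A point of $\Ex_r(f)$ is a psd Gram tensor $\sum_{i=1}^r p_i\otimes p_i$ with the $p_i$ linearly independent and quadratically independent; so over a fixed general $f$ the set $\Ex_r(f)$ is the image in $\Gram(f)$ of the fibre $\{(p_1,\dots,p_r)\in W_r:\sum p_i^2=f\}$, after quotienting by the orthogonal group $O(r)$ acting on the tuple (this is exactly the equivalence from \ref{dfngramspec}). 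Hence $\dim\Ex_r(f)$ is the dimension of that fibre minus $\dim O(r)=\binom r2$, provided the generic fibre of the parametrization over the relevant component has the expected dimension.

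First I would compute the dimension of the source stratum: the set of pairs $(f,\theta)$ with $f\in\Sigma_{2d}$ general and $\theta\in\Ex_r(f)$, which is the image of $W_r/O(r)$. We have $\dim W_r=r(d+1)$ since $W_r$ is open in $(\R[x]_d)^r$, so $\dim\bigl(W_r/O(r)\bigr)=r(d+1)-\binom r2$. The map $W_r/O(r)\to\Sigma_{2d}$ has image of dimension $N_{2d}=2d+1$ when $r$ lies strictly inside the Pataki range, and more care is needed at the endpoints; but for $f$ in an open dense subset $U$ of $\Sigma_{2d}$ the generic fibre over $f$ has dimension
$$\dim\Ex_r(f)\>=\>r(d+1)-\binom r2-(2d+1),$$
and the routine simplification $r(d+1)-\tfrac12 r(r-1)-(2d+1)=\tfrac12(r-2)(2d-r+1)$ gives the stated formula. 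One should double-check this arithmetic and note it is automatically $\ge0$ on the Pataki range $2\le r\le k$ with $\binom{r+1}2\le 2d+1$, vanishing exactly at the two endpoints (so at $r=2$ the rank two extreme points are isolated, consistent with the count $2^{d-1}$ mentioned in the introduction).

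The substantive points, and the main obstacle, are (i) showing the parametrization $W_r/O(r)\to\Sigma_{2d}$ is \emph{dominant} and \emph{generically smooth of the expected relative dimension}, so that a general fibre really has dimension $r(d+1)-\binom r2-(2d+1)$ rather than something larger, and (ii) handling the endpoint $r=k$ of the Pataki range, where one must verify the image is still dense in $\Sigma_{2d}$ and the fibre dimension drops to $0$; the analogue of the argument in the proof of Theorem \ref{main} (every psd binary form is a sum of two squares, so $S_r$ is dense, and a dimension count on $\Sigma_{2d}\setminus S$) should supply density, while the generic smoothness is the genuinely technical step. I would establish (i) by a tangent space / Jacobian computation: the differential of $(p_i)\mapsto\sum p_i^2$ at a quadratically independent tuple is $(\dot p_i)\mapsto 2\sum p_i\dot p_i$, whose image is $p_1\R[x]_d+\cdots+p_r\R[x]_d$, and one needs this to be all of $\R[x]_{2d}$ for generic $(p_i)$ — equivalently $\sum_i p_i A_d=A_{2d}$, which for binary forms follows from a Bézout/base-point argument (choosing $p_1,p_2$ with no common root already gives $p_1A_d+p_2A_d=A_{2d}$). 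Combining this surjectivity of the differential with the $O(r)$-invariance identifies the generic fibre dimension, and intersecting the finitely many resulting open dense sets (one per $r$ in the Pataki range, as in the proof of Theorem \ref{main}) yields the single open dense $U\subset\Sigma_{2d}$ claimed in the statement.
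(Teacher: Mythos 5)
Your argument is correct and follows the same route as the paper's proof: parametrize the rank~$r$ extreme points by the open dense set $W_r\subset(\R[x]_d)^r$ of quadratically independent tuples, identify $\Ex_r(f)$ with the quotient of the fibre of the sos map $\sigma$ over $f$ by the free $O(r)$-action, subtract $\dim O(r)=\choose r2$ from the generic fibre dimension $r(d+1)-(2d+1)$, and intersect the finitely many resulting open dense sets $U_r$. The one genuine difference is how the generic fibre dimension is justified: the paper invokes Hardt's semialgebraic triviality theorem, whereas you propose an explicit Jacobian argument --- the differential of $\sigma$ at $(p_1,\dots,p_r)$ has image $p_1\R[x]_d+\cdots+p_r\R[x]_d$, which is all of $\R[x]_{2d}$ as soon as two of the $p_i$ are coprime (a complete intersection of two coprime binary forms of degree $d$ has socle degree $2d-2$) --- combined with (semialgebraic) Sard, so that a generic $f$ is a regular value and every point of $\sigma^{-1}(f)\cap W_r$ is a regular point, making the fibre smooth of the expected dimension. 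Both work; your version is more self-contained and additionally exhibits $\sigma$ as a generic submersion, while the paper's appeal to Hardt is shorter. Two asides in your write-up should be corrected, though neither affects the proof: the image of $\sigma|_{W_r}$ is dense in $\Sigma_{2d}$ for \emph{every} $r\ge2$ in the Pataki range (continuity of $\sigma$, density of $W_r$, and the fact that every psd binary form is a sum of two squares), so no special care is needed at the upper endpoint; and the formula $\frac12(r-2)(2d-r+1)$ vanishes at $r=2$ and $r=2d+1$, not at both ends of the Pataki range --- at the upper Pataki endpoint $k$ it is in general positive (for instance $d=3$, $r=k=3$ gives dimension $2$, matching the Kummer surface boundary of $\Gram(f)$ for sextics).
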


\begin{proof}
Let $r$ be in the Pataki range. Using notation from the previous
proof, consider the sum of squares map $\sigma\colon W_r\to
\R[x]_{2d}$, $(p_1,\dots,p_r)\mapsto\sum_{i=1}^rp_i^2$. Its image is
dense in $\Sigma_{2d}$. It follows from local triviality of
semialgebraic maps (Hardt's theorem, see e.g.\ \cite{bcr} Theorem
9.3.2)
that, for every $f$ in an open dense set $U_r\subset\Sigma_{2d}$,
the fibre $\sigma^{-1}(f)$ has dimension $r(d+1)-(2d+1)$.
The orthogonal group $O(r)$ has dimension $\choose r2$. It acts on
the fibre $\sigma^{-1}(f)$ with trivial stabilizer subgroups,
and the orbits are precisely the extreme points of $\Gram(f)$ of rank
$r$. So we get
$$\dim\Ex_r(f)\>=\>r(d+1)-(2d+1)-\choose r2\>=\>
\frac12(r-2)(2d-r+1)$$
for every $f\in U_r$. Take $U$ to be the intersection of the sets
$U_r$ for all $r$ in the Pataki range, to get the desired conclusion.
\end{proof}

\begin{rem}
At least for general positive $f$ of degree $\ge12$, the boundary of
$\Gram(f)$ is a union of positive dimensional faces.
This is reflected by the fact that, for $2d\ge8$ and any $r$ in the
Pataki range, the number $\frac12(r-2)(2d-r+1)$ from Corollary
\ref{dimexr} is smaller than the dimension of the boundary of
$\Gram(f)$, which is $\choose d2-1$, for general $f\in\Sigma_{2d}$.
\end{rem}


\section{Edges between extreme points of rank two}

\begin{lab}
We keep considering binary forms, so we work in $\R[x]=\R[x_1,x_2]$.
Let $f\in\Sigma_{2d}$. Recall (\cite{clr} Example 2.13, \cite{cpsv}
Proposition 4.1) how Gram
tensors $\theta\in\Gram(f)$ of rank $\le2$ correspond to product
decompositions $f=g\ol g$ with $g\in\C[x]$, where $\ol g$ is the form
that is coefficient-wise complex conjugate to $g$. Any $\theta\in
\Gram(f)$ with $\rk(\theta)\le2$ has the form $\theta=p\otimes p+
q\otimes q$ where $p,\,q\in\R[x]$ satisfy $f=p^2+q^2=(p+iq)(p-iq)$.
Conversely, a factorization $f=g\ol g$ with $g\in\C[x]$ gives a Gram
tensor $\theta=p\otimes p+q\otimes q$ of $f$, namely $p=\frac12
(g+\ol g)$ and $q=\frac1{2i}(g-\ol g)\in\R[x]$. Two factorizations
$f=g\ol g=h\ol h$ give the same Gram tensor of $f$ if and only if $h$
is a scalar multiple of $g$ or $\ol g$. In particular, if we assume
that $f$ has no multiple complex roots, we see that $f$ has (no Gram
tensors of rank one and) precisely $2^{d-1}$ Gram tensors of rank
two. All of them are extreme points of $\Gram(f)$.
\end{lab}

\begin{lab}
When $g$ has only real zeros, $\Gram(f)\cong\Gram(fg^2)$ naturally.
Hence we discuss $\Gram(f)$ for strictly positive $f$ only. Let
$d\ge1$, let $f\in\Sigma_{2d}$ be strictly positive, and let us first
consider the cases of very small degree. If $d=1$ then $\Gram(f)$ is
a single point of rank two. If $d=2$ then $\Gram(f)$ is a
nondegenerate interval, the relative interior of which consists of
points of rank~$3$. If $f$ has simple roots, both end points have
rank~$2$. Otherwise $f$ is a square, and one end point has rank~$1$,
the other has rank~$2$.

The case $d=3$ is covered in the next result (see also \cite{cpsv}
Section 4.2):
\end{lab}

\begin{prop}
Let $f\in\Sigma_6$ be strictly positive. Then $\dim\Gram(f)=3$, and
the points in $\relint\Gram(f)$ have rank~$4$. Moreover,
\begin{itemize}
\item[(a)]
$\Gram(f)$ has no faces of dimension $1$ or $2$,
\item[(b)]
$\Gram(f)$ has $4$, $3$ or $2$ extreme points of rank $\le2$,
\item[(c)]
all other extreme points have rank~$3$.
\end{itemize}
\end{prop}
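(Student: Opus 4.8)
The plan is to work with the full Gram spectrahedron of a strictly positive sextic $f\in\Sigma_6$ and extract all the information from the dimension formula $\dim\scrF(U)=\binom{r+1}2-\dim(UU)$ together with the Pataki range. First I would record the basic numerics: here $n=2$, $d=3$, so $V=\R[x]_3$ has dimension $N_d=4$, the target space $\R[x]_6$ has dimension $N_{2d}=7$, and for $f\in\interior(\Sigma_6)$ we get $\dim\Gram(f)=\binom{d}2=3$. The Pataki range, described by $r\ge2$ and $\binom{r+1}2\le 2d+1=7$, is exactly $\{2,3\}$: indeed $\binom{3}2=3\le7$ but $\binom{4}2=6\le 7$ forces $r\le 3$ — wait, $\binom{4}{2}=6\le 7$, so $r=3$ is allowed, and $\binom{5}{2}=10>7$, so $r\le 3$. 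Hence extreme points have rank $2$ or $3$, and the generic point (in $\relint\Gram(f)$) has some rank $r$ with $\binom{r+1}2\le 7$; since $\dim\Gram(f)=3>0$ the generic rank cannot be $2$ (quad-independent pairs give extreme points), and it cannot exceed $4$ by the lower Pataki inequality $m\ge\binom{n-r+1}{2}$, i.e.\ $3\ge\binom{5-r}2$, which gives $r\ge 3$; combined with $\rk\le 4=\dim V$, and noting that rank $3$ would make the supporting face have dimension $\binom{4}{2}-\dim(UU)$ for a $3$-dimensional $U\subset\R[x]_3$, which by quadratic independence of a generic such $U$ is $6-6=0$ — too small for $\relint$ of a $3$-dimensional spectrahedron. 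So the generic rank is $4$: when $U=\R[x]_3$ itself, $\dim(UU)=\dim\R[x]_6=7$, so $\dim\scrF(U)=\binom{5}{2}-7=10-7=3=\dim\Gram(f)$, confirming $\scrF(U)=\Gram(f)$ and that $\relint\Gram(f)$ has rank $4$.

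For part (a), I would argue that a face $F$ of dimension $1$ or $2$ has rank $\rk(F)\in\{2,3\}$ by the Pataki range applied to $F$ itself (a proper face is again a spectrahedron, and its rank lies strictly below $4$). If $\rk(F)=2$ then $U=\scrU(F)$ is $2$-dimensional and a generic $2$-dimensional subspace of $\R[x]_3$ is quadratically independent (by Corollary \ref{quadindep}, since $\binom{3}{2}=3\le 7$), giving $\dim F=3-\dim(UU)=3-3=0$; one must also check that there is no $2$-dimensional $U$ that is a face subspace for this particular $f$ and fails to be quadratically independent. Two binary cubics $p,q$ fail quadratic independence iff $p^2,pq,q^2$ are linearly dependent, which forces $p,q$ to share a common linear factor, say $p=\ell u$, $q=\ell v$ with $\ell\in\R[x]_1$; then $p^2+q^2=\ell^2(u^2+v^2)$ divides $f$, contradicting that $f$ is strictly positive with no repeated (real) roots — more precisely $\ell^2\mid f$ is impossible. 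If $\rk(F)=3$, then $U=\scrU(F)$ is a $3$-dimensional face subspace, and I claim it too is automatically quadratically independent for strictly positive $f$: if $\dim(UU)\le 5$ then the six products $p_ip_j$ satisfy at least one relation; since any $3$-dimensional subspace of $\R[x]_3$ has a base locus that is either empty or a single point, the only way to drop dimension is again a common factor, handled as above. So $\dim F=6-6=0$, ruling out faces of dimension $1$ or $2$.

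For part (b), the count of rank $\le 2$ extreme points comes directly from the $\clr$/$\cpsv$ correspondence already recalled in the preceding paragraph of the paper: Gram tensors of rank $\le 2$ correspond to factorizations $f=g\bar g$ with $g\in\C[x]_3$, up to replacing $g$ by a scalar multiple of $g$ or $\bar g$. Since $f$ has six complex roots counted with multiplicity (real roots occurring in they would be repeated, but $f>0$ so all roots are non-real and come in conjugate pairs — that's three conjugate pairs if roots are simple), choosing $g$ amounts to choosing one root from each conjugate pair, giving $2^{3-1}=4$ tensors when all roots are simple, hence $4$ extreme points of rank $\le 2$ (all of rank exactly $2$, no rank-one tensors since $f$ is not a square). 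When $f$ has a repeated non-real root the conjugate pair is also repeated, the two choices within that pair coincide, and the count drops: one repeated pair gives $2^{2}/2$... more carefully, a repeated conjugate pair collapses the corresponding binary choice, yielding $4\to 3$ or, with further coincidence or a real double root, $\to 2$. I would enumerate the degenerations of the root configuration of a positive sextic to see that $4$, $3$, $2$ are exactly the possibilities and all are realized. Part (c) is then immediate: every extreme point has rank in $\{2,3\}$ by Pataki, those of rank $\le 2$ are the ones just counted, and all remaining extreme points have rank $3$.

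The main obstacle I expect is part (a): showing rigorously that \emph{every} face subspace $U$ of dimension $2$ or $3$ — not just the generic subspace — is quadratically independent when $f$ is strictly positive. The argument hinges on the structural fact that a linear dependence among the quadratic products of a subspace $U\subset\R[x]_3$ forces the elements of $U$ to share a common linear factor (this is special to the binary case and should follow from unique factorization plus a dimension count on base loci), and that such a common factor is incompatible with $f=\sum p_i^2$ being strictly positive with no repeated complex roots. Pinning down this implication cleanly, and correctly handling the boundary cases where $f$ \emph{does} have repeated roots (needed for the full statement of (b)), is where the real work lies; everything else is bookkeeping with the dimension formula and the Pataki inequalities.
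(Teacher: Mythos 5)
Your route to part (a) is genuinely different from the paper's. The paper argues that a proper face of positive dimension would have rank $3$ and would therefore contain two extreme points of rank $\le 2$ together with the segment joining them; it then shows by an explicit $4\times4$ determinant that for any two such extreme points the span of $gh,\,g\bar h,\,\bar gh,\,\bar g\bar h$ is $4$-dimensional, so the midpoint of the segment has rank $4$ and lies in the interior --- contradiction. You instead classify face subspaces directly through the dimension formula: $2$-dimensional subspaces never support positive-dimensional faces, and a $3$-dimensional face subspace $U$ with $\dim(UU)\le 5$ would have a common linear factor $\ell$, forcing $\ell^2\mid f$ and hence a real zero of $f$. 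This is an attractive argument (no determinant, and it treats every strictly positive $f$ uniformly, not just generic ones), but its crux --- that a quadratically dependent $3$-dimensional subspace of $\R[x_1,x_2]_3$ must have a common linear factor --- is only asserted, and you say yourself that this is the main obstacle. Since your proof of (a) and your identification of the relative-interior rank as $4$ both rest on it, this is a genuine gap in the write-up. The fact is true and has a short proof you could insert: diagonalize the quadratic relation over $\R$; every signature except $(2,1)$ forces a linear dependence among the basis, so one is reduced to $u^2+v^2=w^2$ with $u,v,w$ a basis of $U$; then $(w-u)(w+u)=v^2$, and if $\gcd(U)=1$ the forms $w\pm u$ are coprime, hence each is a constant times a square in $\C[x_1,x_2]$, which is impossible for nonzero binary forms of odd degree $3$. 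With this lemma your argument is complete and arguably cleaner than the paper's computation.

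Two smaller points. In the rank-$2$ case your intermediate claim is off: a relation $ap^2+bpq+cq^2=0$ does not produce a common linear factor of $p$ and $q$; factoring the quadratic over $\C$ (or writing it as a signed sum of two squares) forces $p,q$ to be linearly dependent, so every $2$-dimensional subspace of binary forms is automatically quadratically independent and rank-$2$ faces are points for that reason --- your conclusion is right, but not by the mechanism you describe (a common factor does not by itself create a quadratic relation). In (b), the degenerate cases are only gestured at (``I would enumerate''), and the mention of a real double root is vacuous since $f>0$; the three root patterns $q_1q_2q_3$, $q_1^2q_2$, $q_1^3$ with counts $4$, $3$, $2$ should be written out, though the paper itself is equally brief here. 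The numerics ($\dim\Gram(f)=3$, Pataki range $\{2,3\}$, hence (c)) agree with the paper.
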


\begin{proof}
The extreme
points of rank $\le2$ correspond to complex factorizations
$f=p\ol p$. Depending on whether $f$ has six, four or two different
roots, there are four, three or two essentially different such
factorizations. The corresponding psd Gram tensors have rank two
except when $f$ is a square, i.e.\ has only two different roots; then
one of the Gram tensors has rank one.
If $\Gram(f)$ had a proper face of positive dimension, its rank would
have to be~$3$. To prove (a) it therefore suffices to show that, for
any two extreme points $\theta\ne\theta'$ of rank $\le2$, the segment
$[\theta,\theta']$ meets the interior of $\Gram(f)$. Let
$f=p\ol p=q\ol q$ be the two factorizations corresponding to $\theta$
and $\theta'$. We can assume $p=gh$, $q=g\ol h$ with
$$g\>=\>(x-a_1)(x-a_2),\ h\>=\>x-a_3$$
and $\{a_1,a_2,a_3\}\cap\{\ol a_1,\ol a_2,\ol a_3\}=\emptyset$.
For the supporting face $F$ of $\frac12(\theta+\theta')$ we
have
$$\scrU(F)\>=\>\spn(gh,\>g\ol h,\>\ol gh,\>\ol{gh})$$
Calculating the determinant gives
$$(a_1-\ol a_1)(a_1-\ol a_2)(a_2-\ol a_1)(a_2-\ol a_2)
(a_3-\ol a_3)^2\>\ne\>0$$
This means that $\frac12(\theta+\theta')$ has rank $4$, and hence
lies in the interior of $\Gram(f)$.
\end{proof}

When the positive sextic $f$ is general, the algebraic boundary of
$\Gram(f)$ is a Kummer surface, see \cite{orsv} Section~5 and
\cite{cpsv} Section 4.2. In this case, assertion (a) also follows
from the fact that a Kummer surface in $\P^3$ does not contain a
line.

Now we are interested in arbitrary degrees. Let $f\in\R[x]_{2d}$ be a
sufficiently general positive form. We ask: For which pairs
$\theta\ne\theta'$ in $\Ex_2(f)$ is the line segment
$[\theta,\theta']$ an edge of $\Gram(f)$, i.e.\ a one-dimensional
face?

\begin{thm}\label{edges}%
Let $d\ge4$. For all forms $f$ in an open dense subset of
$\Sigma_{2d}$, the following is true:
\begin{itemize}
\item[(a)]
$d=4$: For each of the $\choose82=28$ pairs $\theta\ne\theta'$ in
$\Ex_2(f)$, the interval $[\theta,\theta']$ is contained in the
boundary of $\Gram(f)$. For precisely $16$ of these pairs,
$[\theta,\theta']$ is a face of $\Gram(f)$. These $16$ edges form a
graph isomorphic to $K_{4,4}$, the complete bipartite graph on two
sets of four points each.
\item[(b)]
$d\ge5$: For any two $\theta\ne\theta'$ in $\Ex_2(f)$, the
line segment $[\theta,\theta']$ is a face of $\Gram(f)$.
\end{itemize}
\end{thm}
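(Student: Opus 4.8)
\textbf{Proof plan for Theorem \ref{edges}.}
The starting point is the dictionary from the previous section: a rank two extreme point $\theta\in\Ex_2(f)$ corresponds to a complex factorization $f=g\ol g$ with $g\in\C[x]_d$, and $\theta=p\otimes p+q\otimes q$ with $p=\tfrac12(g+\ol g)$, $q=\tfrac1{2i}(g-\ol g)$; equivalently $\im(\theta)=\spn_\R(p,q)=\spn_\C(g,\ol g)\cap\R[x]$. If $f$ is general positive of degree $2d$, its $2d$ roots are distinct and come in $d$ complex conjugate pairs $\{\alpha_j,\ol\alpha_j\}$; a factorization $f=g\ol g$ amounts to choosing, for each $j$, whether $g$ gets $\alpha_j$ or $\ol\alpha_j$. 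Two such tensors $\theta,\theta'$ coming from $g,g'$ differ exactly in a subset $J\subset\{1,\dots,d\}$ of the pairs where the choices disagree; write $g=g_0g_1$, $g'=g_0\ol g_1$ where $g_0$ carries the roots common to $g,g'$ and $g_1$ carries $\{\alpha_j:j\in J\}$, so $\deg g_1=|J|=:s$ and $1\le s\le d$. The supporting face $F$ of $\tfrac12(\theta+\theta')$ has $\scrU(F)=\im(\theta)+\im(\theta')=\spn_\C(g_0g_1,\,g_0\ol g_1,\,\ol g_0g_1,\,\ol g_0\ol g_1)\cap\R[x]$, a space of $\R$-dimension at most $4$ (exactly $4$ when $s<d$, and $3$ when $s=d$ since then $g_0$ is constant and $\ol g_0g_1$ is a scalar multiple of $g_0\ol g_1$'s conjugate partner — one should check this carefully). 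By Corollary \ref{chaktextpts} and the dimension formula \ref{uu}, $[\theta,\theta']$ is a face (edge) of $\Gram(f)$ if and only if $\dim F=1$, i.e.\ iff $\dim(UU)=\binom{\dim U+1}2-1$ where $U=\scrU(F)$: when $\dim U=4$ this means $\dim(UU)=9$, i.e.\ the $10$ quadratic monomials in a basis of $U$ span a $9$-dimensional space, so there is exactly one quadratic relation; when $\dim U=3$ (the case $s=d$) it means $\dim(UU)=5$.

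The whole question thus reduces to computing $\dim(UU)$ for $U=\scrU(F)$, and this is governed entirely by the partition $\deg g_0=d-s$, $\deg g_1=s$. First I would handle the $\dim U=4$ generic case. A natural $\C$-basis of $U_\C:=U\otimes\C$ is $g_0g_1,\ g_0\ol g_1,\ \ol g_0g_1,\ \ol g_0\ol g_1$; note $(g_0g_1)(\ol g_0\ol g_1)=(g_0\ol g_1)(\ol g_0g_1)=|g_0|^2|g_1|^2=f$, so already one obvious relation among the $10$ products is present. Hence $[\theta,\theta']$ is an edge iff $g_0g_1,\ g_0\ol g_1,\ \ol g_0g_1,\ \ol g_0\ol g_1$ is a \emph{quadratically independent} sequence of degree-$d$ forms in $\C[x]$ modulo that single relation — more precisely iff $\dim(U_\C U_\C)=9$, equivalently iff $f=(g_0g_1)(\ol g_0\ol g_1)=(g_0\ol g_1)(\ol g_0g_1)$ is the \emph{only} (up to scalar) quadratic relation. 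The products to examine are, writing $G=g_0,\ H=g_1$ and bars for conjugates:
\[
G^2H^2,\ G^2H\ol H,\ G^2\ol H^2,\ G\ol G H^2,\ G\ol G H\ol H,\ G\ol G\ol H^2,\ \ol G^2 H^2,\ \ol G^2 H\ol H,\ \ol G^2\ol H^2,
\]
where I have already used $G^2H\ol H$ to stand also for the product $(GH)(\ol G\ol H)=(G\ol H)(\ol GH)$ that yields the relation. The key structural point is that these nine forms all lie in the space $GH\cdot\C[x]_{d-s}\cdot(\text{stuff})$ — more usefully, each is divisible by an appropriate product of the fixed forms $G,\ol G,H,\ol H$ (each of whose degrees is $d-s$ or $s$), so the question of linear (in)dependence among them can be analyzed by looking at their zero divisors on $\P^1$: the roots of $f$ are the $2d$ points $\{$roots of $G,\ol G,H,\ol H\}$, all distinct by genericity, and a linear relation $\sum c_\nu m_\nu=0$ among the nine monomials $m_\nu$ translates into a polynomial identity of degree $2d$ each of whose terms is supported on a known sub-multiset of these $2d$ points. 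I expect that chasing which monomials share which divisors (e.g.\ $G^2H^2,G^2H\ol H,G^2\ol H^2$ are exactly the ones divisible by $G^2$; $G^2H^2,G\ol GH^2,\ol G^2H^2$ the ones divisible by $H^2$; etc.) reduces the rank computation to a small, explicitly Vandermonde-type determinant in the roots, which is nonzero precisely under the distinctness hypotheses — so for general $f$, $[\theta,\theta']$ is an edge iff this combinatorial/linear-algebra condition on the pair $(d-s,s)$ holds.

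Finally I would turn the crank on the small cases. For $d\ge5$: one shows the nine monomials above have rank exactly $9$ for \emph{every} $s\in\{1,\dots,d\}$ (in the $s=d$ subcase the relevant count is instead $\dim(U_\C U_\C)=5$ among the $6$ products of a $3$-element basis, again forced by genericity), hence $\dim F=1$ for every pair, giving (b). The expected main obstacle is the degree-$8$ case $d=4$: here $d-s\in\{0,1,2,3\}$, and for some splittings the nine monomials will turn out to have rank only $8$ (two independent quadratic relations, so $\dim F=2$ and $[\theta,\theta']$ is not a face though it sits in the boundary), while for others rank $9$. I anticipate the split is: $[\theta,\theta']$ is an edge iff $s=1$ or $s=3$ (i.e.\ $g,g'$ differ in an odd number of conjugate pairs), and is a non-face boundary interval iff $s=2$ or $s=4$; since for each of the eight rank-two points $g$ the partners $g'$ with $|J|$ odd are exactly those differing from $g$ in an odd-size subset of the $4$ pairs, that is $\binom41+\binom43=4+4=8$ partners, but each edge counted twice would give $32$ — so I would instead organize the eight $g$'s by the parity class of (say) $|\{j:g\text{ takes }\alpha_j\}|$, obtaining two classes of four, with an edge precisely between $g,g'$ in \emph{different} classes (that is exactly the $K_{4,4}$), and $4\times 4=16$ edges, matching the stated count. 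Verifying this parity bookkeeping and confirming, by one explicit $9\times 9$ (really much smaller, after using the evident divisibilities) determinant computation, that rank $=8$ exactly when $s$ is even, is the technical heart; everything else is the dictionary from the previous section plus Corollary \ref{chaktextpts}. One must also check the easy ``boundary'' half of (a): every $[\theta,\theta']$ with $d=4$ lies in $\partial\Gram(f)$, which holds because any such supporting face $F$ has $\dim F\le 2 < \binom 42=6=\dim\Gram(f)$, so $F$ is proper.
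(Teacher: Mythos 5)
Your reduction is the right one, and it is in fact the same as the paper's: with $p=g_0g_1$, $\ol p=\ol{g_0}\ol{g_1}$, $q=g_0\ol{g_1}$, $\ol q=\ol{g_0}g_1$ spanning the (generically $4$-dimensional) space $\scrU(F)$, the segment $[\theta,\theta']$ is an edge iff the nine products $g_0^{a_1}\ol{g_0}^{\,a_2}g_1^{b_1}\ol{g_1}^{\,b_2}$ ($a_1+a_2=b_1+b_2=2$) are linearly independent, the relation $p\ol p=q\ol q$ being the one unavoidable relation; and your parity bookkeeping for $d=4$ (edge iff the two factorizations disagree in an odd number of conjugate pairs, giving two classes of four points and $K_{4,4}$ with $16$ edges) agrees with the paper's description in terms of common roots. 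But the proposal stops exactly where the mathematical content of the theorem lies: the two linear-algebra facts are only announced (``I expect\dots'', ``I anticipate\dots''), not proved. For the degree partition $\{1,3\}$ in degree $8$ you must actually prove independence; the paper does this not by a Vandermonde computation but by an ideal-theoretic argument: with $I=\idl{g_1,g_2}$, $J=\idl{h_1,h_2}$ and the gcd conditions, $(J^2)_2=A_2$, and $I$ is a complete intersection, hence Gorenstein of socle degree $4$, so $I_5=A_5$ and $(I^2)_8=A_8$, whence the nine octics span the $9$-dimensional space $A_8$. Your sketched root/divisor chase is not carried out, and it is not clear it reduces to anything of Vandermonde shape. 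More seriously, for the partition $\{2,2\}$ you need the nine octics to be \emph{dependent}, and for the statement ``not an edge'' you need this for the actual $g_i,h_i$ arising from $f$, i.e.\ a universal identity valid for all quadratics -- something that cannot be obtained by genericity arguments at all; the paper's own proof of this (Lemma \ref{22case}) is a brute-force verification that the corresponding $9\times9$ determinant vanishes identically. Likewise for $d\ge5$ you merely assert rank $9$ for every splitting; the paper derives it from the $\{1,3\}$ case by the specialization $g_i=G_i\ell^{\delta-3}$, $h_i=H_i\ell^{\epsilon-1}$ plus openness of independence. Without proofs of these three facts there is no proof of the theorem.

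Two smaller points. Your subcase $s=d$ with $\dim U=3$ is vacuous: if the two factorizations disagree in all $d$ conjugate pairs, then $g'$ is proportional to $\ol g$, so $\theta'=\theta$ by the identification of factorizations up to conjugation and scalars; distinct points of $\Ex_2(f)$ always have $1\le s\le d-1$, and for general $f$ one has $\dim\scrU(F)=4$. Also, your claim $\dim F\le2$ in the boundary argument for $d=4$ is unproved (and unnecessary): it is simpler to note that every point of $[\theta,\theta']$ has rank $\le4<5=\dim\R[x]_4$, while interior points of $\Gram(f)$ have full rank, so the segment lies in the boundary.
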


\begin{lab}\label{edgesetup}%
Let $f=p\ol p=q\ol q$ be complex factorizations of $f$ that
correspond to $\theta$ and $\theta'$, respectively. The supporting
face $F$ of $[\theta,\theta']$ therefore has $\scrU(F)_\C=
\spn(p,\ol p,q,\ol q)\subset\C[x]_d$, and $\dim(F)$ is the number of
quadratic relations between $p,\ol p,\,q$ and $\ol q$. We can split
$p=gh$ into two nontrivial complex factors in such a way that
$\theta'$ corresponds to the factorization $f=q\ol q$ with
$q=g\ol h$. Thus
$$\scrU(F)_\C\>=\>\spn(gh,\>g\ol h,\>\ol gh,\>\ol{gh}).$$
For general $f$ we have $\dim\scrU(F)=4$.
Assuming this, $[\theta,\theta']$ is an edge of $\Gram(f)$ if and
only if there is only one quadratic relation between $p=gh$,
$\ol p=\ol{gh}$, $q=g\ol h$ and $\ol q=\ol gh$, i.e.\ if and only if
the nine products
$$g^{a_1}\,\ol g^{a_2}\,h^{b_2}\,\ol h^{b_2},\quad a_i,\,b_i\ge0,\
a_1+a_2=b_1+b_2=2\eqno(*)$$
are linearly independent. (To be sure, there always is one quadratic
relation between $p,\,\ol p,\,q$ and $\ol q$, namely
$p\ol p=q\ol q$.)

The key case for Theorem \ref{edges} is $d=4$.
It is made more explicit in the next two lemmas:
\end{lab}

\begin{lem}\label{31case}%
Let $g_1,\,g_2\in\C[x]$ have degree~$3$, let $h_1,\,h_2\in\C[x]$ have
degree~$1$. Then the nine octic forms
$$g_1^{a_1}g_2^{a_2}h_1^{b_1}h_2^{b_2},\quad a_i,\,b_i\ge0,\
a_1+a_2=b_1+b_2=2$$
are linearly independent if (and only if) $\gcd(g_1,g_2)=
\gcd(h_1,h_2)=1$.
\end{lem}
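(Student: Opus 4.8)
The plan is to prove Lemma \ref{31case} by reducing it to an independence statement about monomials in one variable, exploiting the fact that a binary form of degree $m$ with no repeated factor is determined (up to scalar) by the set of its roots in $\P^1$. First I would dispose of the ``only if'' direction, which is the easy one: if $\gcd(g_1,g_2)$ is a nontrivial common factor $c$ of degree $\ge1$, then every one of the nine products is divisible by $c^2$, so they all lie in the subspace $c^2\cdot\C[x]_{8-2\deg c}$ of dimension $9-2\deg c<9$, hence cannot be linearly independent; the same argument applies verbatim if $\gcd(h_1,h_2)\ne1$. So from now on assume $\gcd(g_1,g_2)=\gcd(h_1,h_2)=1$.

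For the substantive direction I would pass to an affine chart. Since $g_1,g_2$ are coprime they have no common zero, so after a projective change of coordinates (which is harmless, as it transforms the nine products by a common linear automorphism of $\C[x]_8$) I may assume that neither $g_1$ nor $g_2$ vanishes at the point $\infty=[0:1]$, and likewise neither $h_1$ nor $h_2$; then, dehomogenizing with respect to $x_2$, I regard $g_1,g_2$ as cubic polynomials in $t=x_1/x_2$ and $h_1,h_2$ as linear polynomials in $t$, all of exact degree equal to their homogeneous degree, and a linear relation among the nine octic forms is equivalent to the same linear relation among the nine polynomials $g_1^{a_1}g_2^{a_2}h_1^{b_1}h_2^{b_2}$ in $\C[t]_{\le8}$. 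The key structural observation is that the three quadratic combinations $g_1^2,\ g_1g_2,\ g_2^2$ span the linear system $\{g\in\C[t]_{\le6}: g\equiv0 \text{ on } \mathrm{div}(g_1)+\mathrm{div}(g_2)\}$ — more precisely, because $g_1,g_2$ are coprime cubics, the map $\C[t]_{\le2}\to\C[t]_{\le6}$, $\ell\mapsto \ell g_1$ has image complementary to $g_2\C[t]_{\le3}$, and one checks that $\spn(g_1^2,g_1g_2,g_2^2)$ is exactly the $3$-dimensional space of degree-$\le6$ polynomials divisible by no... — here it is cleaner to argue directly: the products $g_1^2,g_1g_2,g_2^2$ are linearly independent (their leading terms already are, or: a relation $\alpha g_1^2+\beta g_1g_2+\gamma g_2^2=0$ forces $g_1\mid\gamma g_2^2$, hence $\gamma=0$, etc.), and similarly $h_1^2,h_1h_2,h_2^2$ are linearly independent in $\C[t]_{\le2}$.

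The heart of the argument is then a bilinear/tensor factorization. Let $U=\spn(g_1^2,g_1g_2,g_2^2)\subset\C[t]_{\le6}$ and $W=\spn(h_1^2,h_1h_2,h_2^2)\subset\C[t]_{\le2}$; both are $3$-dimensional by the previous paragraph, and the nine products in question are exactly the image of $U\otimes W$ under the multiplication map $\mu\colon\C[t]_{\le6}\otimes\C[t]_{\le2}\to\C[t]_{\le8}$. So the claim is that $\mu$ restricted to $U\otimes W$ is injective. Suppose $\sum_k u_k w_k=0$ with $u_k\in U$, $w_k\in W$; I would choose the $w_k$ to be linearly independent, so I must show all $u_k=0$. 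Writing $w_k=h_1^{2}\phi_k(h_2/h_1)$ for a scalar polynomial $\phi_k$ of degree $\le2$... — more robustly: specialize $t$ to a zero $\zeta$ of $h_1$ with $h_2(\zeta)\ne0$ (possible since $\gcd(h_1,h_2)=1$). Then $h_1(\zeta)=0$, so $w_k(\zeta)=h_2(\zeta)^2\cdot(\text{coefficient of }h_2^2\text{ in }w_k)$, giving a linear functional on $W$ that is nonzero; and the relation yields $\sum_k (\text{that coeff of }w_k)\,u_k(\zeta)=0$, a relation among the values $u_k(\zeta)\in\C$. To leverage this I would instead run the standard argument: if $\mu|_{U\otimes W}$ fails to be injective, then since $\dim(U\otimes W)=9=\dim\C[t]_{\le8}$, it also fails to be surjective, so there is a nonzero linear functional $\Lambda$ on $\C[t]_{\le8}$ vanishing on all nine products $g_ig_j h_k h_l$. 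Such $\Lambda$ is given by a polynomial $P$ of degree $\le8$ via $\Lambda(Q)=\text{Res or }\int$ — cleanest is $\Lambda$ corresponds to a linear recurrence / to a rational function — so I'd rather phrase the obstruction geometrically: the nine products all vanishing under $\Lambda$ means $\Lambda$ annihilates $U\cdot W$; but $U$ contains $g_1^2$ and $g_2^2$ with $\gcd(g_1^2,g_2^2)=1$, and $W$ contains $h_1^2,h_2^2$ with $\gcd(h_1^2,h_2^2)=1$. I would finish by the following clean linear-algebra lemma: if $U\subset\C[t]_{\le6}$ is the $3$-space of polynomials lying in $g_1^2\C+g_1g_2\C+g_2^2\C$ — equivalently, I claim $U=\{p\in\C[t]_{\le6}: p \text{ vanishes to order }\ge2 \text{ along } V(g_1)\cup V(g_2)\text{ in the appropriate sense}\}$, no — the truly clean route is: $U$ is a $3$-dimensional subspace of $\C[t]_{\le6}$ whose members have no common zero (since $g_1^2,g_2^2\in U$ are coprime), and $W$ likewise in $\C[t]_{\le2}$ with no common zero. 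Then $\mu|_{U\otimes W}$ is injective: a relation $\sum u_k w_k=0$ with $w_k$ independent gives, evaluating at any $\zeta$, a relation $\sum w_k(\zeta)u_k(\zeta)=0$; choosing for each $k$ a point $\zeta_k$ where $w_k(\zeta_k)\ne0$ but — this needs $W$ to have a basis of the form ``$w_k$ vanishing at $\zeta_j$ for $j\ne k$,'' which holds because $W$ contains two coprime elements of degree $2$ and $1$ respectively...

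The main obstacle, then, is precisely this last injectivity claim for $\mu|_{U\otimes W}$, and I expect the right way to nail it is not by point evaluations but by the observation that $U\otimes W \to \C[t]_{\le8}$ is, after choosing the bases $(g_1^2,g_1g_2,g_2^2)$ and $(h_1^2,h_1h_2,h_2^2)$, the "product" of two injections whose images $U$ and $W$ are complementary-type subspaces: concretely, $U$ is a complement to $g_2\,\C[t]_{\le3} + (\text{stuff})$... Rather than belabor this, in the writeup I would invoke the following fact, which I would state and prove as a short sublemma: \emph{if $U\subset\C[t]_{\le m}$ and $W\subset\C[t]_{\le n}$ are subspaces such that every nonzero element of $U$ is coprime to some fixed nonzero element of $W$, and vice versa — or simply, such that $U$ and $W$ are "base-point free" systems — then the multiplication map $U\otimes W\to\C[t]_{\le m+n}$ is injective provided $\dim U\cdot\dim W\le m+n+1$.} Here $m=6$, $n=2$, and $\dim U\cdot\dim W = 9 = m+n+1$, so we are exactly at the boundary, and the base-point-freeness is guaranteed by the coprimality hypotheses $\gcd(g_1,g_2)=\gcd(h_1,h_2)=1$. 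Proving this sublemma — essentially a statement that the "Segre-type" variety $\{uw : u\in U, w\in W\}$ spans the full $9$-dimensional space — is where the real content sits, and I would handle it by a dimension count on the incidence variety $\{([u],[w],\zeta): u(\zeta)=w(\zeta)=0\}$ exactly in the style of Lemma \ref{lem2}, or by the explicit check that the $9\times 9$ Wronskian-type coefficient matrix is nonsingular, reducing to showing its determinant is a nonzero polynomial in the coefficients of $g_1,g_2,h_1,h_2$ that does not vanish whenever the two gcd conditions hold — the resultant-based factorization of this determinant being the cleanest certificate.
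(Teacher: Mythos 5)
Your reduction of the statement to the injectivity of the multiplication map $\mu\colon U\otimes W\to\C[x]_8$ with $U=\spn(g_1^2,g_1g_2,g_2^2)$ and $W=\spn(h_1^2,h_1h_2,h_2^2)$ is correct, and the ``only if'' direction is fine, but the heart of the lemma is never proved. You end by delegating the injectivity to a sublemma of the form: \emph{if $U\subset\C[t]_{\le m}$ and $W\subset\C[t]_{\le n}$ are base-point-free subspaces with $\dim U\cdot\dim W\le m+n+1$, then $\mu|_{U\otimes W}$ is injective}. That sublemma is false, and Lemma \ref{22case} of this very paper is a counterexample: take $g_1,g_2$ and $h_1,h_2$ coprime \emph{quadratics} in general position; then $U=\spn(g_1^2,g_1g_2,g_2^2)$ and $W=\spn(h_1^2,h_1h_2,h_2^2)$ are $3$-dimensional base-point-free subspaces of $\C[t]_{\le4}$ (since $g_1^2,g_2^2$, resp.\ $h_1^2,h_2^2$, have no common zero), $\dim U\cdot\dim W=9=4+4+1$, and yet the nine products are always linearly dependent. (A smaller counterexample: $U=W=\spn(x^2,y^2)$, where $x^2\otimes y^2-y^2\otimes x^2$ lies in the kernel.) So no argument based only on coprimality/base-point-freeness plus a dimension count can work; any correct proof must use the specific degree split $(3,3)$ and $(1,1)$, which is exactly what distinguishes the true Lemma \ref{31case} from the false analogue \ref{22case}. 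Your remaining fallback --- ``check that the $9\times9$ determinant is a nonzero polynomial not vanishing under the gcd hypotheses'' --- is not a proof but a restatement of the claim (and note the paper resorts to that brute-force computation only for the \emph{negative} statement \ref{22case}, where exhibiting an identical vanishing suffices).

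For comparison, the paper exploits the coincidence $9=\dim\C[x]_8$ to replace independence by \emph{spanning}: with $I=\idl{g_1,g_2}$ and $J=\idl{h_1,h_2}$, the nine products span $(I^2J^2)_8$, so one must show $(I^2J^2)_8=A_8$. Since $h_1,h_2$ are coprime linear forms, $J_1=A_1$, reducing to $(I^2)_8=A_8$; and since $I$ is a complete intersection of two coprime cubics, $A/I$ is Artinian Gorenstein of socle degree $4$, whence $I_5=A_5$ and $(I^2)_8\supset I_5I_3=A_5I_3=I_8=A_8$. The socle degree is where the degree distribution enters (for two coprime quadratics it would be $2$, and the analogous chain does not reach degree $8$ in the needed way), which is precisely the asymmetry your proposed sublemma cannot see. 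If you want to salvage your setup, you need an argument at this level of specificity, e.g.\ the commutative-algebra one above or an equivalent explicit apolarity/duality computation; as written, the proposal has a genuine gap at its central step.
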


\begin{lem}\label{22case}%
For arbitrary $g_1,\,g_2,\,h_1,\,h_2\in\C[x]$ of degree~$2$, the nine
octic forms
$$g_1^{a_1}g_2^{a_2}h_1^{b_1}h_2^{b_2},\quad a_i,\,b_i\ge0,\
a_1+a_2=b_1+b_2=2$$
are linearly dependent.
\end{lem}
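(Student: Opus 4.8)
\textbf{Proof plan for Lemma \ref{22case}.}
The plan is to exhibit an explicit nonzero linear relation among the nine octic forms $g_1^{a_1}g_2^{a_2}h_1^{b_1}h_2^{b_2}$ with $a_1+a_2=b_1+b_2=2$. The decisive structural observation is that each of these nine forms lies in the product subspace $WW'$, where $W:=\spn(g_1^2,g_1g_2,g_2^2)\subset\C[x]_4$ and $W':=\spn(h_1^2,h_1h_2,h_2^2)\subset\C[x]_4$; more precisely, $WW'$ is spanned by exactly these nine products. So it suffices to show $\dim(WW')\le8$, since nine spanning vectors of an at most $8$-dimensional space are linearly dependent. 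Now $W$ is spanned by the three quadratic forms $g_1^2,g_1g_2,g_2^2$, which are the values of the Veronese-type map $\ell\mapsto\ell^2$ restricted to the pencil $\spn(g_1,g_2)$; in particular $W$ is contained in the $3$-dimensional space $U_2:=\{g^2:g\in\spn(g_1,g_2)\}^{\mathrm{lin}}$, and similarly $W'\subset\spn(h_1^2,h_1h_2,h_2^2)$. The point is that any three forms of the shape $\ell_1\ell_2$ with $\ell_i$ ranging over a fixed two-dimensional space of degree-$2$ forms satisfy a single quadratic (rank) relation: namely, writing $g_1=x_1^2\cdot(\text{stuff})$ is not needed — instead use that for $g=sg_1+tg_2$ the form $g^2=s^2g_1^2+2st\,g_1g_2+t^2g_2^2$ is a square, so $W$ is the image of a conic in $\P(\C[x]_4)$, and three points of a conic together with the ambient structure force $\dim\le3$ (which we already know). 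The actual dimension count I need is on $WW'$: I claim $WW'$ equals the space $\spn\{g\cdot h:g\in W,\ h\in W'\}$, and this has dimension at most $\binom{3+1}{2}-1=5$? — no, this is false in general; let me instead argue directly.

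The honest approach: reduce to a $\gcd$ normalization. By $\GL_2$-equivariance (acting on $x_1,x_2$ and hence on $\C[x]_2$) and by allowing arbitrary invertible linear combinations within the pencils $\spn(g_1,g_2)$ and $\spn(h_1,h_2)$ — changes that only modify the nine forms by an invertible linear substitution and therefore preserve linear (in)dependence — I may assume $g_1,g_2$ (resp.\ $h_1,h_2$) are in a convenient normal form. Generically a pencil of binary quadratics contains exactly two squares; normalizing those to $g_1=x_1^2$, $g_2=x_2^2$ (and similarly $h_1=x_1^2$, $h_2=x_2^2$ if the two pencils coincide, or a sheared version otherwise), the nine octics become monomials or near-monomials in a two-dimensional ambient, and one reads off a relation immediately: e.g.\ with $g_1=h_1=x_1^2$, $g_2=h_2=x_2^2$ one gets $(g_1^2)(h_2^2)=x_1^4x_2^4=(g_2^2)(h_1^2)=(g_1g_2)(h_1h_2)$, already three coincidences. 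The remaining work is to handle the degenerate strata of pencils (coincident squares, a double square, etc.) by a specialization/closedness argument: linear dependence of the nine forms is a closed condition on $(g_1,g_2,h_1,h_2)$, it holds on the dense generic stratum just described, hence it holds for all $(g_1,g_2,h_1,h_2)$. This closedness step is clean because "these nine explicit forms are linearly dependent" is the vanishing of all $9\times9$ minors of the coefficient matrix, a Zariski-closed condition, and I have verified it on a dense set.

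Alternatively, and more conceptually, one can phrase the whole thing via the multiplication map $\mu\colon\sfS_2(\C[x]_2)\to\C[x]_4$: the nine forms are $\mu$-images of the nine symmetric tensors $g_ig_j\otimes h_kh_l$-type elements, but the cleaner statement is that $W\cdot W'\subset \C[x]_8$ is a quotient of $W\otimes W'$ (dimension $9$) and the claim is precisely that this quotient map is not injective. Injectivity would force the bilinear multiplication $W\times W'\to\C[x]_8$ to be "generically nondegenerate", and a dimension count in $\C[x]_8$ (which has dimension $9$) shows the image would have to be all of $\C[x]_8$; but the image consists only of forms that are $\C[x]_2$-multiples of elements of $W$, i.e.\ lies in $W\cdot\C[x]_4$, and since $\dim W=3<5=\dim\C[x]_4$... this still needs the pencil structure of $W$ to conclude $\dim(W\cdot\C[x]_4)<9$, so it does not gain much over the direct approach.

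The main obstacle is organizing the degenerate cases of the two pencils cleanly; the generic case is a one-line monomial computation, but one must be careful that every pair $(g_1,g_2)$ of degree-$2$ forms (including $g_1,g_2$ proportional, or one of them a perfect square, or $\gcd(g_1,g_2)\ne1$) is covered. The cleanest resolution is the closedness/specialization argument sketched above, which sidesteps case analysis entirely: prove the relation on the dense open set where both pencils are "general" (two distinct squares each, coordinatizable as $x_1^2,x_2^2$ after $\GL_2$ and pencil changes), then invoke Zariski-closedness of linear dependence to extend to all configurations.
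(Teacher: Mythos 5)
Your reduction framework is sound: the nine products span $P^2Q^2$ where $P=\spn(g_1,g_2)$ and $Q=\spn(h_1,h_2)$, so basis changes inside the two pencils and linear changes of the variables $x_1,x_2$ do not affect linear (in)dependence, and dependence is a Zariski-closed condition on $(g_1,g_2,h_1,h_2)$ (vanishing of the single $9\times9$ coefficient determinant, since $\dim\C[x]_8=9$), so it suffices to verify it on a Zariski-dense set of configurations. The genuine gap is that the configurations on which you actually exhibit a relation are not dense. A linear change of variables can normalize the two squares of \emph{one} pencil to $x_1^2,x_2^2$, but it cannot simultaneously normalize the second pencil: a pair of pencils modulo $\mathrm{GL}_2$ has positive-dimensional moduli (for instance the cross-ratio of the four points of $\P^1$ attached to the squares contained in the two pencils), so the case $g_1=h_1=x_1^2$, $g_2=h_2=x_2^2$ in which you read off $(g_1^2)(h_2^2)=(g_2^2)(h_1^2)=(g_1g_2)(h_1h_2)$ covers only the proper closed stratum where the two pencils coincide. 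Zariski closure of a proper closed subset is itself, so the specialization argument cannot propagate from that stratum to all quadruples; the ``sheared version otherwise'' --- say $g_1=x_1^2$, $g_2=x_2^2$, $h_1=\ell_1^2$, $h_2=\ell_2^2$ with $\ell_1,\ell_2$ general linear forms --- is exactly where the lemma has content, and there no relation can be ``read off immediately.'' Your first and third paragraphs likewise stop short of any usable upper bound on $\dim(P^2Q^2)$, as you yourself note.

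So the essential verification is still missing: one must show that for a \emph{general} pair of distinct pencils the nine products fail to span $\C[x]_8$, i.e.\ that the $9\times9$ determinant vanishes identically in the coefficients of $g_1,g_2,h_1,h_2$. This is precisely what the paper does, by a brute-force computation of that determinant for general coefficients; nothing in your proposal replaces it. Your normalization idea is not worthless --- combined with the closedness argument it reduces the check to a one-parameter normal form such as $g_1=x_1^2$, $g_2=x_2^2$, $h_1=(x_1-x_2)^2$, $h_2=(x_1-\lambda x_2)^2$, so the determinant becomes a polynomial in the single parameter $\lambda$ --- but you would still have to carry out that computation and show the resulting polynomial is identically zero, and that step is absent from the proposal.
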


\begin{cor}\label{ge5case}%
Let $g_1,\,g_2,\,h_1,\,h_2\in\C[x]$ with $\deg(g_1)=\deg(g_2)=
\delta\ge1$, $\deg(h_1)=\deg(h_2)=\epsilon\ge1$ and
$\delta+\epsilon\ge5$. If $g_1,g_2,h_1,h_2$ are chosen generically,
the nine forms
$$g_1^{a_1}g_2^{a_2}h_1^{b_1}h_2^{b_2},\quad a_i,\,b_i\ge0,\
a_1+a_2=b_1+b_2=2$$
(of degree $2(\delta+\epsilon)$) are linearly independent.
\end{cor}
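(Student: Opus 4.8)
The plan is to deduce this from the two preceding lemmas by a degeneration (semicontinuity) argument. Linear independence of a fixed finite list of forms is an open condition on the coefficients of $g_1,g_2,h_1,h_2$: the $9\times(2(\delta+\epsilon)+1)$ coefficient matrix of the nine products has rank $9$ on a Zariski-open set, and it suffices to exhibit one quadruple $(g_1,g_2,h_1,h_2)$ of the prescribed degrees for which the nine products are independent. So the whole corollary reduces to producing such a witness for each pair $(\delta,\epsilon)$ with $\delta,\epsilon\ge1$ and $\delta+\epsilon\ge5$.

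First I would handle the base cases by a reduction to degree $8$. Given any witness $(g_1,g_2,h_1,h_2)$ with $\deg g_i=\delta_0$, $\deg h_i=\epsilon_0$, multiplying $g_1,g_2$ by a common square-free form $u$ of degree $\delta-\delta_0$ coprime to everything in sight, and $h_1,h_2$ by a common square-free form $v$ of degree $\epsilon-\epsilon_0$ likewise coprime, transforms the nine products by the common factor $u^2v^2$, hence preserves linear independence. Thus it is enough to find witnesses with $(\delta_0,\epsilon_0)$ a ``minimal'' pair, and in fact $(\delta_0,\epsilon_0)=(3,1)$ will serve for most cases: Lemma \ref{31case} gives, for generic degree-$3$ forms $g_1,g_2$ and generic linear $h_1,h_2$, nine independent octic forms. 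Multiplying $g_1,g_2$ by a generic square-free form of degree $\delta-3$ and $h_1,h_2$ by a generic square-free form of degree $\epsilon-1$ yields a witness whenever $\delta\ge3$ and $\epsilon\ge1$. By the symmetry $(g_i)\leftrightarrow(h_i)$ we also cover $\delta\ge1$, $\epsilon\ge3$. Since $\delta+\epsilon\ge5$, the only pair not yet covered is $(\delta,\epsilon)=(2,\epsilon)$ or $(\delta,2)$ with the other exponent equal to $2$ — but then $\delta+\epsilon=4<5$, which is excluded — so actually the remaining pairs are $(\delta,\epsilon)\in\{(2,3),(3,2)\}$ already handled, and $(\delta,\epsilon)$ with both $\ge2$ are covered as soon as one of them is $\ge3$; the pair $(2,2)$ is exactly the one excluded by $\delta+\epsilon\ge5$. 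Hence every admissible $(\delta,\epsilon)$ is reached.

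The one genuinely delicate point — and the reason Lemma \ref{22case} is stated separately — is that the naive degeneration $(\delta_0,\epsilon_0)=(2,2)$ is \emph{not} available: Lemma \ref{22case} says the nine products of degree-$2$ forms are \emph{always} dependent, so one cannot start from $\delta_0=\epsilon_0=2$ and inflate. This is why the argument must route through the $(3,1)$ configuration of Lemma \ref{31case} instead. The main obstacle in writing this cleanly is therefore bookkeeping: checking that for every pair $(\delta,\epsilon)$ in range one genuinely lands in the ``$\delta\ge3$ or $\epsilon\ge3$'' regime and that the square-free multipliers can be chosen coprime to $g_1g_2h_1h_2$ (automatic over an infinite field, a fortiori over $\C$). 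The openness step is routine, the coprimality is routine, and the only substantive input is the two lemmas; no further computation is needed.
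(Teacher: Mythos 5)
Your proposal is correct and follows essentially the same route as the paper: reduce the generic statement to exhibiting a single witness, obtain it from the $(3,1)$ configuration of Lemma \ref{31case} by multiplying $g_1,g_2$ and $h_1,h_2$ by common factors of degrees $\delta-3$ and $\epsilon-1$, and use the $(g_i)\leftrightarrow(h_i)$ symmetry plus $\delta+\epsilon\ge5$ to ensure one of $\delta,\epsilon$ is at least $3$ (the paper simply takes powers of one linear form $\ell$ as the padding factor, since multiplication by any fixed nonzero form is injective, so your square-free/coprimality conditions are harmless but unnecessary).
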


\begin{lab}
Before establishing \ref{31case}, \ref{22case} and \ref{ge5case}, we
show how these imply Theorem \ref{edges}. First let $d=4$, let
$f\in\Sigma_8$ have simple complex zeros, and let $f=p\ol p=q\ol q$
be two nontrivial factorizations corresponding to extreme points
$\theta\ne\theta'$ in $\Gram(f)$ (c.f.\ \ref{edgesetup}). Since
$\rk(\theta+\theta')\le\rk(\theta)+\rk(\theta')=4$, it is obvious
that $[\theta,\theta']$ is contained in the boundary of $\Gram(f)$.
Write $p=g_1g_2$ and $q=g_1\ol g_2$ as in \ref{edgesetup}. If
$\deg(g_1)=\deg(g_2)=2$, the nine forms $(*)$ (see \ref{edgesetup})
are linearly dependent by Lemma \ref{22case}, and so
$[\theta,\theta']$ is not an edge. Otherwise $\{\deg(g_1),\,
\deg(g_2)\}=\{1,3\}$. By Lemma \ref{31case}, therefore, the nine
forms $(*)$ are linearly independent, and so $[\theta,\theta']$ is an
edge.

This proves the $d=4$ case of Theorem \ref{edges}. Indeed, the eight
points of $\Ex_2(f)$, corresponding to the eight essentially
different factorizations $f=p\ol p$, decompose into two subclasses of
four points each, where two different factorizations
$f=p\ol p=q\ol q$ belong to the same subclass if and only if $p$ and
$q$ have precisely two roots in common.

If $d\ge5$, if $f\in\Sigma_{2d}$ is sufficiently general, and if
$f=p\ol p=q\ol q$ are two factorizations belonging to $\theta\ne
\theta'$, Corollary \ref{ge5case} shows that $(*)$ are linearly
independent, whence $[\theta,\theta']$ is an edge.
\end{lab}

\begin{proof}[Proof of Lemma \ref{31case}]
It is obvious that $\gcd(g_1,g_2)=\gcd(h_1,h_2)=1$ are necessary for
the nine octics to be linearly independent.
For the converse assume these conditions, and consider the ideals
$I=\idl{g_1,g_2}$ and $J=\idl{h_1,h_2}$ in $A=k[x]$.
We have to prove
$(I^2J^2)_8=A_8$. Now $\gcd(h_1,h_2)=1$ implies $J_1=A_1$ and hence
$(J^2)_2=A_2$. So $(I^2J^2)_8$ contains $(I^2)_6A_2=(I^2)_8$, and it
is enough to prove $(I^2)_8=A_8$. The ideal $I$ is a complete
intersection since $\gcd(g_1,g_2)=1$, hence a Gorenstein ideal of
socle degree~$4$. So $I_5=A_5$,
and so $(I^2)_8$ contains $I_5I_3=A_5I_3=I_8=A_8$.
\end{proof}

\begin{proof}[Proof of Lemma \ref{22case}]
Unfortunately, we have no better argument than a brute force
computation: For $g_i,\,h_i$ with general coefficients, the
corresponding $9\times9$ determinant vanishes identically.
\end{proof}

\begin{proof}[Proof of Corollary \ref{ge5case}]
It suffices to prove the assertion for one specific choice of the
$g_i$ and $h_i$. We can assume $\delta\ge3$.
Let $G_1,\,G_2,\,H_1,\,H_2$ satisfy $\deg(G_i)=3$, $\deg(H_i)=1$ and
$\gcd(G_1,G_2)=\gcd(H_1,H_2)=1$, and let $l\ne0$ be any linear form.
Then by Lemma \ref{31case}, the assertion of \ref{ge5case} is true
for $g_i:=G_i\ell^{\delta-3}$, $h_i:=H_i\ell^{\epsilon-1}$, $i=1,2$.
\end{proof}


\end{document}